\documentclass[11pt]{article}%
\usepackage{amsmath,mathrsfs}
\usepackage{amsfonts}
\usepackage{amssymb}
\usepackage{graphicx,subfigure,color}
\usepackage{algorithm}
\usepackage{multirow}
\usepackage{xcolor}
\usepackage[affil-it]{authblk}
\usepackage{setspace}
\usepackage[numbers,sort&compress]{natbib}
\usepackage{subfigure}
\usepackage{booktabs}
\usepackage{latexsym,array,graphicx}
\setcounter{MaxMatrixCols}{30}
\providecommand{\U}[1]{\protect\rule{.1in}{.1in}}
\marginparwidth 0pt
\oddsidemargin  0pt
\evensidemargin  0pt
\marginparsep 0pt
\topmargin 0pt
\textheight 22.0 truecm
\textwidth 15.8 truecm
\allowdisplaybreaks[4]

\usepackage{algorithm,algpseudocode,float}

\makeatletter

\makeatother

\numberwithin{equation}{section}
\newenvironment{keywords}{\noindent{\bf Key words:}}

\newenvironment{AMS}{\noindent{\bf Mathematics Subject Classification:}}

\newtheorem{theorem}{Theorem}
\newtheorem{example}{Example}
\newtheorem{lemma}[theorem]{Lemma}

\newtheorem{assumption}{Assumption}
\newtheorem{proposition}{Proposition}
\newtheorem{remark}{Remark}

\newtheorem{corollary}[theorem]{Corollary}
\newcommand{\diag}{\mathrm{diag}}
\newcommand{\blockdiag}{\mathrm{blkdiag}}

\newenvironment{proof}{\noindent{\bf Proof:}}{\hfill\fbox{}\vspace*{1mm}}

\title{A parallel-in-time two-sided preconditioning for all-at-once system from a non-local evolutionary equation with weakly singular kernel }
\author[a,b]{Xue-lei Lin \thanks{Corresponding author. e-mail: linxuelei@csrc.ac.cn}}
\author[c]{Michael K. Ng \thanks{e-mail: mng@maths.hku.hk}}
\author[d]{Yajing Zhi \thanks{e-mail: yjzhi@connect.hku.hk}}

\affil[a]{
Shenzhen JL Computational Science and Applied Research Institute, Shenzhen, P.R. China.
}
\affil[b]{Beijing Computational Science Research Center, Beijing 100193, China.
}
\affil[c]{%
	Department of Mathematics, The University of Hong Kong, Pokfulam, Hong Kong.}
\affil[d]{%
	Department of Computer Science, The University of Hong Kong, Pokfulam, Hong Kong.}
\date{}
\begin{document}	
	\maketitle

\begin{abstract}
In this paper, we study \textcolor{black}{a} parallel-in-time (PinT) algorithm for all-at-once system from
a non-local evolutionary equation with weakly singular kernel where the temporal term involves a non-local convolution with a weakly singular kernel and the spatial term is the usual Laplacian operator with variable coefficients. Such a problem has been intensively studied in recent years thanks to the \textcolor{black}{numerous} real world applications. However, due to the non-local property of the time evolution, solving the equation in PinT manner is difficult. We propose to use a two-sided preconditioning technique for the all-at-once discretization of the equation. Our preconditioner is constructed by replacing the variable diffusion coefficients with a constant coefficient to obtain a constant-coefficient all-at-once matrix. We split a square root of \textcolor{black}{the} constant Laplacian operator out of the constant-coefficient all-at-once matrix as a right preconditioner and take the remaining part as a left preconditioner, which constitutes our two-sided preconditioning. Exploiting the diagonalizability of the constant-Laplacian matrix and the triangular Toeplitz structure of the temporal discretization matrix, we obtain efficient representations  of inverses of \textcolor{black}{the} right and \textcolor{black}{the} left preconditioners, because of which the iterative solution can be fast updated in \textcolor{black}{a} PinT manner.
Theoretically, the condition number of \textcolor{black}{the} two-sided preconditioned matrix is proven to be uniformly bounded by a constant independent of \textcolor{black}{the}  matrix size. To the best of our knowledge, for the non-local evolutionary equation with variable coefficients, this is the first attempt to develop a PinT preconditioning technique that has fast and exact implementation and that the corresponding preconditioned system has a uniformly bounded condition number.
 Numerical results are reported to confirm the 
efficiency of the proposed two-sided preconditioning technique.
\end{abstract}
\begin{keywords} Preconditioning, condition number analysis, all-at-once system, Toeplitz matrix, parallel-in-time
\end{keywords}

\begin{AMS}
	65F10;65F08; 15A12; 15A60; 
\end{AMS}

	\section{Introduction}\label{introduction}
Consider a non-local evolutionary equation with \textcolor{black}{a} weakly singular kernel: 
\begin{align}
&\frac{1}{\Gamma(1-\alpha)}\int_{0}^{t}\frac{\partial u({\bf x},s)}{\partial s}(t-s)^{-\alpha}ds=\nabla{\boldsymbol{\cdot}}(a({\bf x})\nabla u)+f({\bf x},t),\quad {\bf x}\in \Omega\subset\mathbb{R}^d,~ t\in(0,T],\label{sub-diffusioneq}\\
& u({\bf x},t)=0,\quad {\bf x}\in\partial \Omega,~t\in(0,T],\label{Drcheltboundary}\\
& u({\bf x},0)=\psi({\bf x}),\quad {\bf x}\in \Omega,\label{initial condition}
\end{align}
where $\Gamma(\cdot)$ is the gamma function, $\alpha\in(0,1)$, $\Omega=\prod_{i=1}^{d}(\check{c}_i,\hat{c}_i)$ is an open hyper-rectangle; $\partial\Omega$ denotes the boundary of $\Omega$; $a({\bf x})\in[\check{a},\hat{a}]$  with some positive constants $0<\check{a}\leq\hat{a}$; $a$, $f$ and $\psi$ are all known functions.

Let ${\bf L}_a\in\mathbb{R}^{J\times J}$ be the central finite difference discretization of $-\nabla(a({\bf x})\nabla)$ on uniform grid. For \textcolor{black}{the} temporal discretization, we focus on a convolution quadrature, L1 scheme \cite{sun2006fully,lin2007finite,jin2016analysis,liao2018sharp} in this paper. With the L1 scheme, the temporal discretization has the following form
\begin{align}
\frac{1}{\Gamma(1-\alpha)}\int_{0}^{n\tau}\frac{\partial u({\bf x},s)}{\partial s}(t-s)^{-\alpha}ds\approx \frac{1}{\tau^{\alpha}}\sum\limits_{k=1}^{n}l_{n-k}^{(\alpha)}u({\bf x},n\tau)+\frac{1}{\tau^{\alpha}}l^{n,\alpha}\psi({\bf x}),~{\bf x}\in\Omega,~ n=1,2,..., N,\label{convlqdra}
\end{align}
 where $\tau=T/N$ is the temporal step-size, $N$ is the total number of time steps. There are also other convolution quadratures fitting the form \eqref{convlqdra}; see, e.g.,  Gr$\ddot{\rm u}$nwald formula \cite[pp. 208]{podlubny1999},  L1-2 scheme \cite{gao2014new}, L2-$1_{\sigma}$ scheme \cite{alikhanov2015new}. 
 
Combining \eqref{convlqdra} and the finite difference spatial discretization, we obtain a full discretization of \eqref{sub-diffusioneq}--\eqref{initial condition} as follows
\begin{equation}\label{timestpdisc}
\frac{1}{\tau^\alpha}\sum\limits_{k=1}^{n}l_{n-k}^{(\alpha)}\tilde{\bf u}^k+{\bf L}_a\tilde{\bf u}^n=\tilde{\bf f}^n,\quad  n=1,2,...,N,
\end{equation}
where $\tilde{\bf u}^n\in\mathbb{R}^{J\times 1}$ is a vector whose components are approximate values of $u(\cdot,n\tau)$ on spatial grid points arranged in \textcolor{black}{a} lexicographic ordering, $\tilde{\bf f}^n$ contains the initial condition and the values of $f(\cdot,n\tau)$ on \textcolor{black}{the} spatial grid points. 
For \textcolor{black}{the} stability and convergence proof of the full discretization scheme \eqref{timestpdisc}, one may refer to \cite{zhang2011alternating,jin2016analysis,liao2018sharp}.

Putting the $N$ many linear systems in a single large linear system, we obtain the all-at-once \textcolor{black}{system} as follows
\begin{align}\label{allatoncesysts}
\tilde{\bf A}\tilde{\bf u}=\tilde{\bf f},
\end{align}
where $\tilde{\bf u}=(\tilde{\bf u}^{1};\tilde{\bf u}^{2};\cdots;\tilde{\bf u}^{N})\in\mathbb{R}^{NJ\times 1}$, $\tilde{\bf f}=(\tilde{\bf f}^{1};\tilde{\bf f}^{2};\cdots;\tilde{\bf f}^{N})\in\mathbb{R}^{NJ\times 1}$,
\begin{align*}
	\tilde{\bf A}={\bf I}_{N}\otimes{\bf L}_a+{\bf T}\otimes{\bf I}_{ J },\quad {\bf T}=\frac{1}{\tau^\alpha}\left[
	\begin{array}
		[c]{cccc}
		l_{0}^{(\alpha)} &       &  &\\
		l_{1}^{( \alpha)}& l_{0}^{(\alpha)} &  &\\
		\vdots&\ddots &\ddots&\\
		l_{N-1}^{( \alpha)}& \ldots & l_{1}^{( \alpha)} & l_{0}^{(\alpha)}
	\end{array}
	\right],
\end{align*}
${\bf I}_k$ denotes \textcolor{black}{the} $k\times k$ identity matrix, `$\otimes$' denotes the Kronecker product.


The non-local evolutionary equation with \textcolor{black}{the} weakly singular kernel has attracted much attention in recent years, thanks to its \textcolor{black}{numerous} real world applications. The classical local evolutionary equation (with \textcolor{black}{the} first order temporal derivative) rests on the assumption that the mean square particle displacement grows linearly with respect to time. A lot of experimental studies indicate that the linear-growth assumption may not be accurate enough to describe some physical processes in which the mean square displacement grows sub-linearly or super-linearly with respect to time. These experimental studies \textcolor{black}{cover} a wide range of important practical applications including  visco-elastic materials \cite{caputo1966Linear,eidulman2004},  thermal diffusion in fractal domain \cite{nigmatullin2010the}, column experiments \cite{hatano1998}, protein transport in cell membrane \cite{kou2008}.
Actually, if the underlying stochastic process is defined by continuous time random walk, then the non-local evolutionary equation \eqref{sub-diffusioneq}--\eqref{initial condition} is exactly a macroscopic model for the probability density function of particles whose mean square displacement grows sub-linearly like $t^{\alpha}$. For more applications of the non-local evolutionary equation, we refer interested readers to \textcolor{black}{the} survey papers \cite{2019Numerical,sun2018213,metzler2014anomalous,metzler2000random}.

The time-stepping method solves $\tilde{\bf u}^{n+1}$ \eqref{timestpdisc} after solving $\tilde{\bf u}^{n}$ , which is a sequential solver. Moreover, due to the non-local time evolution in \eqref{sub-diffusioneq}, $l_k^{(\alpha)}$'s $(k=1,2,...,N)$ are all nonzero numbers. Because of this, before solving the linear system $(\frac{1}{\tau^{\alpha}}l_0^{(\alpha)}{\bf I}+{\bf L}_a)\tilde{\bf u}^n=\tilde{\bf f}^{n}-\frac{1}{\tau^{\alpha}}\sum\limits_{k=1}^{n-1}l_{n-k}^{(\alpha)}\tilde{\bf u}^{k}$, one has to compute the non-local summation $-\frac{1}{\tau^{\alpha}}\sum\limits_{k=1}^{n-1}l_{n-k}^{(\alpha)}\tilde{\bf u}^{k}$ over all previous time steps. \textcolor{black}{Computing these non-local summation requires $\mathcal{O}(N^2J)$ flops in total. The computation of such non-local summation is already expensive, let alone solving \textcolor{black}{those} local time linear systems.} To remedy the situation, fast kernel compression methods are proposed in \cite{jiang2017fast,mclean2012fast,li2010fast,baffet2017kernel} so that the non-local summation over  \textcolor{black}{all the} previous time steps can be reduced to a local summation over only a few previous time steps. With these kernel compression techniques, \textcolor{black}{the operation cost for computing the right hand side vectors for those local time linear systems can be reduced to $\mathcal{O}(N\log^2 N)$ flops.} However, these kernel compression methods are still implemented in a time-stepping pattern.

\textcolor{black}{Besides the time-stepping method, PinT method is another type of popular methods for \eqref{timestpdisc}, which solves  $\{\tilde{\bf u}^{n}|n=1,2,...,N\}$ in a parallel way.}  In \cite{lin2016fast,gu2020109576,lupangsun,lu2017Approximate}, PinT algorithms are developed by employing a fast diagonalizable approximation to ${\bf T}$ in the all-at-once system \eqref{allatoncesysts}. The so approximated all-at-once system is block diagonalizable with each eigen-block corresponding to a complex scalar shifted spatial system. When $a$ is a constant or $d=1$, these complex spatial systems can be solved by fast Poisson solver or fast banded solver. However, when $a$ is not a constant and $d>1$, then there is no fast direct solver for these complex spatial systems. In \cite{lin2016fast}, multigrid iterative solvers are proposed to solve these complex spatial systems. In general, there is no theoretical convergence guarantee for these complex multigrid solvers. 

There are also other PinT algorithms proposed for the non-local evolutionary equation \eqref{sub-diffusioneq}--\eqref{initial condition} rather than the full discrete equations \eqref{timestpdisc}. These algorithms include the temporal Laplace transform based algorithms \cite{kwon2003parallel,mclean2010maximum} and 
parareal algorithms \cite{xu2015parareal,wu2018parareal,li2013parallel,fu2019preconditioned}. The Laplace transform based algorithms highly depends on regularity of $f$ and it requires the computation of \textcolor{black}{the} temporal Laplace transform of $f$, which is usually expensive; see, e.g., \cite{gu2020109576,sheen2003parallel,sheen2000parallel,wu2017laplace} for more discussions. For local time problems, the parareal algorithm was firstly proposed in \cite{lions2001} and an interesting improved version using \textcolor{black}{the} parallel coarse grid correction can be found in \cite{wushulin2018}. Due to the non-local temporal convolution, a direct application of the parareal algorithm to the non-local evolutionary problem  has its computational time non-uniformly distributed among the processors, because of which the \textcolor{black}{processors for earlier time-steps have} longer time waiting phenomenon. In \cite{wu2018parareal}, a local time-integrator based \textcolor{black}{the} parareal algorithm is proposed for the non-local evolutionary equation to avoid the time waiting phenomenon. However, numerical results in \cite{wu2018solving} indicates that the local time-integrator based \textcolor{black}{the} parareal algorithm for the non-local evolutionary equation may converge slowly or even diverge sometimes.

By exchanging the order of the Kronecker product in $\tilde{\bf A}$, the all-at-once system \eqref{allatoncesysts} can be equivalently rewritten as
\begin{equation}\label{allatoncesysst}
{\bf A}{\bf u}={\bf f},
\end{equation}
where ${\bf A}={\bf L}_a\otimes{\bf I}_N+{\bf I}_J\otimes {\bf T}$. Clearly, \eqref{allatoncesysst} can be obtained by applying \textcolor{black}{a simple permutation transformation} to $\tilde{\bf u}$ and $\tilde{\bf f}$.

In this paper, we propose a novel two-sided PinT preconditioning technique for the all-at-once system  \eqref{allatoncesysst} with non-constant $a({\bf x})$ and arbitrary $d$. Our preconditioning technique begins with replacing $a({\bf x})$ by a constant $\beta$ to obtain the constant-coefficient all-at-once matrix ${\bf P}:=(\beta{\bf L}_1)\otimes{\bf I}_N+{\bf I}_J\otimes {\bf T}$, where ${\bf L}_1\in\mathbb{R}^{J\times J}$ is the discretization matrix of \textcolor{black}{the} constant Laplacian $-\nabla^2$.
Instead of applying ${\bf P}$ directly, we develop a two-sided preconditioning technique from ${\bf P}$. The right preconditioner is ${\bf P}_r:=(\beta{\bf L}_1)^{\frac{1}{2}}\otimes{\bf I}_N$ and the left preconditioner ${\bf P}_l:={\bf P}{\bf P}_r^{-1}=(\beta{\bf L}_1)^{\frac{1}{2}}\otimes{\bf I}_N+(\beta{\bf L}_1)^{-\frac{1}{2}}\times {\bf T}$. It is proven in Theorem \ref{mainthm} that the condition number of the two-sided preconditioned matrix ${\bf P}_l^{-1}{\bf A}{\bf P}_r^{-1}$ is uniformly bounded by the constant $\hat{a}/\check{a}$, which indicates that the convergence rate of  Krylov subspace solver for the two-sided preconditioned system  does not deteriorate as $N$ or $J$ increases. Indeed, the numerical results in Section \ref{experimentsection} show that the iteration number for the two-sided preconditioned system keeps bounded as $N$ or $J$ increases. 
Thanks to \textcolor{black}{the} fast diagonalizability of ${\bf L}_1$ and triangular Toeplitz structure of ${\bf T}$, ${\bf P}_r^{-1}$ is diagonalizable by multi-dimension fast sine transform (FST) and ${\bf P}_l^{-1}$ is block diagonalizable by multi-dimension FST with each eigen-block being a triangular Toeplitz matrix. Because of the efficient representations of ${\bf P}_r^{-1}$ and ${\bf P}_l^{-1}$, the matrix-vector product ${\bf P}_l^{-1}{\bf A}{\bf P}_r^{-1}{\bf v}$ for a  given vector ${\bf v}$ can be computed in a PinT pattern, requiring $\mathcal{O}(NJ\log (NJ))$ flops; see Section \ref{implementsection} for more details of \textcolor{black}{the} implementation. Hence, solving the all-at-once system using our two-sided preconditioning iterative method requires $\mathcal{O}(NJ\log(NJ))$ flops in total, which is nearly optimal. The study of PinT preconditioning techniques for \textcolor{black}{the} all-at-once system from the non-local evolutionary equation is still in its infancy.
To the best of our knowledge, for the non-local evolutionary equation \eqref{sub-diffusioneq}--\eqref{initial condition} with non-constant $a$, this is the first attempt to develop a PinT preconditioning technique that can be fast and exactly implemented in a linearithmic complexity and that the corresponding preconditioned system has a uniformly bounded condition number independent of $N$ and $J$.

The rest of this paper is organized as follows. 
In Section 2, the condition number of the two-sided preconditioned all-at-once  matrix is analyzed.
In Section 3, a fast implementation for \textcolor{black}{the} two-sided preconditioning method is proposed and
its complexity is discussed.
In Section \ref{implementsection}, numerical results are reported. In Section 5, we give conclusions and discusses few issues to be considered in future works.

\section{A Two-Sided Preconditioning and Condition Number of the Preconditioned Matrix} \label{mainrsltsec}
In this section, we introduce the two-sided preconditioners and estimate \textcolor{black}{the} condition number of the preconditioned matrix. 

Denote
\begin{equation*}
\beta=\sqrt{\hat{a}\check{a}}.
\end{equation*}
It is clear that $\check{a}\leq\beta\leq\hat{a}$.

For any real symmetric positive semi-definite matrix ${\bf B}\in\mathbb{R}^{k\times k}$, define $${\bf B}^{z}:={\bf V}^{\rm T}\diag([{\bf D}(i,i)]^z)_{i=1}^{k}{\bf V},\quad z\in\mathbb{R},$$ where ${\bf B}={\bf V}^{\rm T}{\bf D}{\bf V}$ denotes the orthogonal diagonalization of ${\bf B}$.

Recall that ${\bf L}_{1}$ denotes the discretization of the constant-coefficient Laplacian $-\nabla^2$.

As mentioned in Section \ref{introduction}, our left preconditioner ${\bf P}_l$ and right preconditioner ${\bf P}_r$ are defined as follows
\begin{equation*}
{\bf P}_l=(\beta{\bf L}_{1})^{-\frac{1}{2}}\otimes{\bf T}+(\beta{\bf L}_{1})^{\frac{1}{2}}\otimes{\bf I}_N,\quad {\bf P}_r=(\beta{\bf L}_{1})^{\frac{1}{2}}\otimes{\bf I}_N.
\end{equation*}
Then, \textcolor{black}{to solve \eqref{allatoncesysst}, it is equivalent to solve the linear system \eqref{twosidedpreclinsys} and to compute the scaling step \eqref{scalling}}
\begin{align}
&{\bf P}_l^{-1}{\bf A}{\bf P}_r^{-1}\hat{\bf u}={\bf P}_l^{-1}{\bf f},\label{twosidedpreclinsys},\\
&{\bf u}={\bf P}_r^{-1}\hat{\bf u}.\label{scalling}
\end{align}
\eqref{twosidedpreclinsys} is the so-called two-sided preconditioned linear system. \textcolor{black}{The} Krylov subspace solver is employed to solve \eqref{twosidedpreclinsys}. In iteration process of a Krylov subspace solver for \eqref{scalling}, it only requires to compute some matrix-vector products ${\bf P}_l^{-1}{\bf A}{\bf P}_r^{-1}{\bf v}$ for some given vectors ${\bf v}$. Hence, the matrix ${\bf P}_l^{-1}{\bf A}{\bf P}_r^{-1}$ is never formed explicitly in the computational process. Instead, ${\bf P}_l^{-1}{\bf A}{\bf P}_r^{-1}{\bf v}$ is computed by $({\bf P}_l^{-1}({\bf A}({\bf P}_r^{-1}{\bf v})))$. 
More details on fast computation of the matrix-vector product and \eqref{scalling} will be discussed in Section \ref{implementsection}.

Before estimating the condition number of the preconditioned matrix ${\bf P}_l^{-1}{\bf A}{\bf P}_r^{-1}$, we introduce some preliminaries and lemmas first.

For any real symmetric matrices ${\bf C}_1,{\bf C}_2\in\mathbb{R}^{k\times k}$, denote ${\bf C}_2 \succ ({\rm or} \succeq) \ {\bf C}_1$
if ${\bf C}_2-{\bf C}_1$ is positive definite (or semi-definite). Especially, we denote ${\bf C}_2 \succ ({\rm or} \succeq) \ {\bf O}$, if ${\bf C}_2$ itself is positive definite (or semi-definite).
Also, ${\bf C}_1 \prec ({\rm or} \preceq) \ {\bf C}_2$ and ${\bf O} \prec ({\rm or} \preceq) \ {\bf C}_2$  have the same meanings as those of ${\bf C}_2 \succ ({\rm or} \succeq) \ {\bf C}_1$ and ${\bf C}_2 \succ ({\rm or} \succeq) \ {\bf O}$, respectively.

Some assumptions on the spatial discretization matrix ${\bf L}_{a}$ are listed as follows.
\textcolor{black}{\begin{assumption}\label{diclaplaprop}
	\begin{description}
		\item[~]
		\item[(i)]${\bf L}_{a}$ is linear with respect to $a({\bf x})$, i.e., for any real-valued function $p$ and $q$ defined on $\Omega$ and any real constant $c$, it holds ${\bf L}_{p+q}={\bf L}_{p}+{\bf L}_{q}$ and ${\bf L}_{cp}=c{\bf L}_{p}$.
		\item[(ii)]${\bf L}_{p}$ is symmetric for any $p:\Omega\mapsto\mathbb{R}$.
		\item[(iii)]${\bf L}_{p}\succeq{\bf O}$ for any nonnegative function $p:\Omega\mapsto\mathbb{R}$.
		\item[(iv)] ${\bf L}_{p}\succ{\bf O}$ for any positive function $p:\Omega\mapsto\mathbb{R}$.
	\end{description}
\end{assumption}}

\begin{lemma}\label{barlineqs}
	\begin{description}
		\item[(i)]$ {\bf O}\prec\check{a}{\bf L}_{1}\preceq {\bf L}_a\preceq\hat{a}{\bf L}_{1}$;
		\item [(ii)] $\check{a}{\bf I}_{J}\preceq{\bf L}_{1}^{-\frac{1}{2}}{\bf L}_a{\bf L}_{1}^{-\frac{1}{2}}\preceq \hat{a}{\bf I}_{J}$.
	\end{description}
	where we recall that the positive constants $\check{a}$ and $\hat{a}$ are lower and upper bounds of $a$, respectively.
\end{lemma}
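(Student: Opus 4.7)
The plan is to derive (i) directly from Assumption \ref{diclaplaprop} by exploiting linearity together with the sign-preservation properties, and then obtain (ii) from (i) by a standard congruence argument using $\mathbf{L}_1^{-1/2}$.

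For part (i), I would first observe that the constant function $\check a>0$ is positive, so by Assumption \ref{diclaplaprop}(iv) applied to $p\equiv \check a$ and by linearity (Assumption \ref{diclaplaprop}(i)), $\check a\,\mathbf{L}_1 = \mathbf{L}_{\check a}\succ \mathbf{O}$. This takes care of the strict positivity at the left end. Next, since $a(\mathbf{x})-\check a\geq 0$ on $\Omega$, Assumption \ref{diclaplaprop}(iii) gives $\mathbf{L}_{a-\check a}\succeq \mathbf{O}$, and linearity (Assumption \ref{diclaplaprop}(i)) rewrites this as $\mathbf{L}_a-\check a\,\mathbf{L}_1\succeq \mathbf{O}$, i.e., $\check a\,\mathbf{L}_1\preceq \mathbf{L}_a$. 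The same idea applied to the nonnegative function $\hat a-a(\mathbf{x})$ yields $\mathbf{L}_a\preceq \hat a\,\mathbf{L}_1$.

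For part (ii), I would use the fact that $\mathbf{L}_1\succ \mathbf{O}$ (which we just established), so $\mathbf{L}_1^{-1/2}$ is a well-defined symmetric positive-definite matrix in the sense of the power definition given just before the statement. Then, using the standard monotonicity of the Loewner order under symmetric congruence, I multiply the chain in (i) on both sides by $\mathbf{L}_1^{-1/2}$:
\begin{equation*}
\check a\,\mathbf{L}_1^{-1/2}\mathbf{L}_1\mathbf{L}_1^{-1/2}\;\preceq\;\mathbf{L}_1^{-1/2}\mathbf{L}_a\mathbf{L}_1^{-1/2}\;\preceq\;\hat a\,\mathbf{L}_1^{-1/2}\mathbf{L}_1\mathbf{L}_1^{-1/2}.
\end{equation*}
Since $\mathbf{L}_1^{-1/2}\mathbf{L}_1\mathbf{L}_1^{-1/2}=\mathbf{I}_J$, this immediately collapses to $\check a\,\mathbf{I}_J\preceq \mathbf{L}_1^{-1/2}\mathbf{L}_a\mathbf{L}_1^{-1/2}\preceq \hat a\,\mathbf{I}_J$.

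There is no real obstacle here: the whole lemma is a clean consequence of the assumptions, and the only step that requires a bit of care is justifying that $\check a\,\mathbf{L}_1 = \mathbf{L}_{\check a}$ (so that I can invoke property (iv) on the \emph{constant} function $\check a$ rather than on $1$ directly) and that congruence by the symmetric positive-definite matrix $\mathbf{L}_1^{-1/2}$ preserves the Loewner inequalities. Both are routine, so the proof should be short.
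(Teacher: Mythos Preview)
Your proposal is correct and matches the paper's approach. For (i) the paper simply says it ``follows from Assumption \ref{diclaplaprop}'', and your argument spells out exactly the intended use of linearity together with (iii) and (iv); for (ii) the paper writes the congruence step as an explicit Rayleigh-quotient substitution $\mathbf{y}=\mathbf{L}_1^{-1/2}\mathbf{z}$, which is the same content as your invocation of Loewner monotonicity under symmetric congruence.
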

\begin{proof}
	${\bf (i)}$ follows from \textcolor{black}{Assumption \ref{diclaplaprop}}. 
	
	Let ${\bf z}\in\mathbb{R}^{J\times 1}$ be an arbitrary nonzero vector. Then,
	\begin{equation*}
	\frac{{\bf z}^{\rm T}{\bf L}_{1}^{-\frac{1}{2}}{\bf L}_a{\bf L}_{1}^{-\frac{1}{2}}{\bf z}}{{\bf z}^{\rm T}{\bf z}}\stackrel{{\bf y}={\bf L}_{1}^{-\frac{1}{2}}{\bf z}}{=\joinrel=\joinrel=\joinrel=\joinrel=\joinrel=}\frac{{\bf y}^{\rm T}{\bf L}_a{\bf y}}{{\bf y}^{\rm T}{\bf L}_{1}{\bf y}},
	\end{equation*}
	which together with ${\bf (i)}$ implies that
	\begin{equation*}
	\check{a}\leq\frac{{\bf z}^{\rm T}{\bf L}_{1}^{-\frac{1}{2}}{\bf L}_a{\bf L}_{1}^{-\frac{1}{2}}{\bf z}}{{\bf z}^{\rm T}{\bf z}}\leq\hat{a}.
	\end{equation*}
	The proof is complete.
\end{proof}

\begin{lemma}\label{spdneqs}
	Let ${\bf B}_1,{\bf B}_2\in\mathbb{R}^{k\times k}$ be real symmetric \textcolor{black}{matrices} such that ${\bf O}\prec{\bf B}_1\preceq{\bf B}_2$. Then, ${\bf O}\prec{\bf B}_2^{-1}\preceq{\bf B}_1^{-1}.$
\end{lemma}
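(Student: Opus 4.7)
The plan is to reduce the two-sided inequality to a one-sided eigenvalue statement by applying a congruence transformation with the positive definite square root of $\mathbf{B}_1$, and then to invoke the spectral theorem to flip the inequality under inversion.

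First, I would observe that $\mathbf{B}_1 \succ \mathbf{O}$ and $\mathbf{B}_2 \succeq \mathbf{B}_1 \succ \mathbf{O}$ ensure that both matrices are symmetric positive definite and hence invertible, with $\mathbf{B}_1^{-1}, \mathbf{B}_2^{-1} \succ \mathbf{O}$ (just by diagonalizing each matrix and inverting the positive eigenvalues). This disposes of the strict-positivity portion of the conclusion immediately. So the substantive task is to establish $\mathbf{B}_2^{-1} \preceq \mathbf{B}_1^{-1}$.

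Next, I would apply the congruence by $\mathbf{B}_1^{-1/2}$ on both sides of $\mathbf{B}_1 \preceq \mathbf{B}_2$. Since congruence by a symmetric nonsingular matrix preserves $\preceq$, this yields
\begin{equation*}
\mathbf{I}_k \;=\; \mathbf{B}_1^{-1/2} \mathbf{B}_1 \mathbf{B}_1^{-1/2} \;\preceq\; \mathbf{B}_1^{-1/2} \mathbf{B}_2 \mathbf{B}_1^{-1/2} \;=:\; \mathbf{C}.
\end{equation*}
Here $\mathbf{C}$ is symmetric positive definite, and the inequality $\mathbf{I}_k \preceq \mathbf{C}$ means every eigenvalue of $\mathbf{C}$ is at least $1$. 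By the spectral theorem, the eigenvalues of $\mathbf{C}^{-1}$ are the reciprocals of those of $\mathbf{C}$, hence all at most $1$, so $\mathbf{C}^{-1} \preceq \mathbf{I}_k$. Writing this out gives
\begin{equation*}
\mathbf{B}_1^{1/2} \mathbf{B}_2^{-1} \mathbf{B}_1^{1/2} \;=\; \bigl(\mathbf{B}_1^{-1/2} \mathbf{B}_2 \mathbf{B}_1^{-1/2}\bigr)^{-1} \;\preceq\; \mathbf{I}_k.
\end{equation*}

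Finally, I would apply the congruence by $\mathbf{B}_1^{-1/2}$ once more to both sides of the last inequality, obtaining $\mathbf{B}_2^{-1} \preceq \mathbf{B}_1^{-1}$, which completes the proof. There is no genuine obstacle here; the only point that requires a brief justification is the eigenvalue-reciprocal step, which follows directly from orthogonally diagonalizing the SPD matrix $\mathbf{C}$. The rest is standard manipulation of the Löwner order under congruence.
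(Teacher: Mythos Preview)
Your proof is correct and follows essentially the same approach as the paper: both arguments use congruence with $\mathbf{B}_1^{\pm 1/2}$ to reduce to the intermediate inequality $\mathbf{B}_1^{1/2}\mathbf{B}_2^{-1}\mathbf{B}_1^{1/2}\preceq\mathbf{I}_k$ and then conclude. The only cosmetic difference is that you reach this inequality by directly inverting $\mathbf{C}=\mathbf{B}_1^{-1/2}\mathbf{B}_2\mathbf{B}_1^{-1/2}$ via eigenvalue reciprocals, whereas the paper obtains it through a similarity between $\mathbf{B}_1^{1/2}\mathbf{B}_2^{-1}\mathbf{B}_1^{1/2}$ and $\mathbf{B}_2^{-1/2}\mathbf{B}_1\mathbf{B}_2^{-1/2}$; your route is marginally more direct.
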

\begin{proof}	
	It is clear that ${\bf B}_1^{-1}\succ{\bf O}$ and ${\bf B}_2^{-1}\succ{\bf O}$.
	Notice that ${\bf B}_1^{\frac{1}{2}}{\bf B}_2^{-1}{\bf B}_1^{\frac{1}{2}}=({\bf B}_1^{\frac{1}{2}}{\bf B}_2^{-\frac{1}{2}})({\bf B}_2^{-\frac{1}{2}}{\bf B}_1^{\frac{1}{2}})$ is similar\footnote{see the definition of matrix similarity in \cite{horn2012matrix}} to $({\bf B}_2^{-\frac{1}{2}}{\bf B}_1^{\frac{1}{2}})({\bf B}_1^{\frac{1}{2}}{\bf B}_2^{-\frac{1}{2}})={\bf B}_2^{-\frac{1}{2}}{\bf B}_1{\bf B}_2^{-\frac{1}{2}}$, which means ${\bf B}_1^{\frac{1}{2}}{\bf B}_2^{-1}{\bf B}_1^{\frac{1}{2}}$ and ${\bf B}_2^{-\frac{1}{2}}{\bf B}_1{\bf B}_2^{-\frac{1}{2}}$ have the same spectrum.  Moreover,
	\begin{align*}
	\frac{{\bf z}^{\rm T}{\bf B}_2^{-\frac{1}{2}}{\bf B}_1{\bf B}_2^{-\frac{1}{2}}{\bf z}}{{\bf z}^{\rm T}{\bf z}}\stackrel{{\bf y}={\bf B}_2^{-\frac{1}{2}}{\bf z}}{=\joinrel=\joinrel=\joinrel=\joinrel=\joinrel=}\frac{{\bf y}^{\rm T}{\bf B}_1{\bf y}}{{\bf y}^{\rm T}{\bf B}_2{\bf y}}\leq 1,
	\end{align*}
	which means the maximal eigenvalue of ${\bf B}_2^{-\frac{1}{2}}{\bf B}_1{\bf B}_2^{-\frac{1}{2}}$ is no larger than 1. Thus, maximal eigenvalue of ${\bf B}_1^{\frac{1}{2}}{\bf B}_2^{-1}{\bf B}_1^{\frac{1}{2}}$ is no larger than 1.
	Hence, 
	\begin{align*}
	1\geq \frac{{\bf z}^{\rm T}{\bf B}_1^{\frac{1}{2}}{\bf B}_2^{-1}{\bf B}_1^{\frac{1}{2}}{\bf z}}{{\bf z}^{\rm T}{\bf z}}\stackrel{{\bf y}={\bf B}_1^{\frac{1}{2}}{\bf z}}{=\joinrel=\joinrel=\joinrel=\joinrel=\joinrel=}\frac{{\bf y}^{\rm T}{\bf B}_2^{-1}{\bf y}}{{\bf y}^{\rm T}{\bf B}_1^{-1}{\bf y}},
	\end{align*}
	which implies ${\bf B}_2^{-1}\preceq{\bf B}_1^{-1}$. The proof is complete.
\end{proof}

\begin{lemma}\textnormal{\textcolor{black}{(see \cite{lin2018separable})}}\label{timematspd}
	For any $\alpha\in(0,1)$, it holds that $l_0^{(\alpha)}>0$ and ${\bf T}+{\bf T}^{\rm T}\succ{\bf O}$.
\end{lemma}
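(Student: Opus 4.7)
The two claims are essentially independent, and I would attack them using the explicit form of the L1 coefficients. The positivity $l_0^{(\alpha)}>0$ is immediate from the closed-form $l_0^{(\alpha)}=1/\Gamma(2-\alpha)$ together with $\Gamma(2-\alpha)>0$ for $\alpha\in(0,1)$, so the substantive part is proving ${\bf T}+{\bf T}^{\rm T}\succ{\bf O}$. My plan is to avoid symbol/Fourier analysis (which would require estimating the real part of a divergent series expansion of a polylogarithm) and instead establish strict diagonal dominance of the symmetric Toeplitz matrix ${\bf T}+{\bf T}^{\rm T}$ with positive diagonal; this will imply positive definiteness by a standard textbook fact (a real symmetric, strictly diagonally dominant matrix with positive diagonal entries is SPD).

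\textbf{Step 1: Sign and telescoping structure of the L1 weights.} Write the L1 weights in the standard form $l_k^{(\alpha)}=[a_k-a_{k-1}]/\Gamma(2-\alpha)$ for $k\geq 1$ with $a_k=(k+1)^{1-\alpha}-k^{1-\alpha}$ and $a_{-1}=0$, so that $l_0^{(\alpha)}=a_0/\Gamma(2-\alpha)=1/\Gamma(2-\alpha)$. Because $t\mapsto t^{1-\alpha}$ is strictly concave on $[0,\infty)$ when $1-\alpha\in(0,1)$, the second forward difference $(k+1)^{1-\alpha}-2k^{1-\alpha}+(k-1)^{1-\alpha}$ is strictly negative, hence $l_k^{(\alpha)}<0$ for all $k\geq 1$. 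Moreover, by the mean value theorem applied to $t^{1-\alpha}$, $a_k>0$ and $a_k\to 0$ as $k\to\infty$.

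\textbf{Step 2: Explicit partial sums.} By telescoping, for every $m\geq 1$,
\begin{equation*}
\sum_{k=1}^{m}\bigl|l_k^{(\alpha)}\bigr|\;=\;\frac{1}{\Gamma(2-\alpha)}\sum_{k=1}^{m}(a_{k-1}-a_k)\;=\;\frac{a_0-a_m}{\Gamma(2-\alpha)}\;=\;\frac{1-a_m}{\Gamma(2-\alpha)}.
\end{equation*}
In particular, $\sum_{k=1}^{m}|l_k^{(\alpha)}|<l_0^{(\alpha)}$ for every finite $m$ since $a_m>0$.

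\textbf{Step 3: Row diagonal dominance of ${\bf T}+{\bf T}^{\rm T}$.} For the $i$-th row ($1\leq i\leq N$), the diagonal entry equals $2l_0^{(\alpha)}/\tau^{\alpha}$, and the sum of absolute values of the off-diagonal entries is
\begin{equation*}
\frac{1}{\tau^{\alpha}}\Bigl(\sum_{k=1}^{i-1}\bigl|l_k^{(\alpha)}\bigr|+\sum_{k=1}^{N-i}\bigl|l_k^{(\alpha)}\bigr|\Bigr)\;=\;\frac{(1-a_{i-1})+(1-a_{N-i})}{\Gamma(2-\alpha)\tau^{\alpha}},
\end{equation*}
where the first sum comes from the lower-triangular contribution of ${\bf T}$ and the second from the symmetric reflection ${\bf T}^{\rm T}$. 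Subtracting from the diagonal entry yields
\begin{equation*}
\frac{2}{\Gamma(2-\alpha)\tau^{\alpha}}-\frac{(1-a_{i-1})+(1-a_{N-i})}{\Gamma(2-\alpha)\tau^{\alpha}}\;=\;\frac{a_{i-1}+a_{N-i}}{\Gamma(2-\alpha)\tau^{\alpha}}\;>\;0,
\end{equation*}
using $a_{j}>0$ for all $j\geq 0$ (with the convention $a_0=1$ covering the corner cases $i=1$ and $i=N$). Hence ${\bf T}+{\bf T}^{\rm T}$ is strictly diagonally dominant with strictly positive diagonal, and therefore ${\bf T}+{\bf T}^{\rm T}\succ{\bf O}$.

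\textbf{Main obstacle.} There is no deep obstacle once one commits to the diagonal-dominance strategy; the real care is bookkeeping the two off-diagonal sums coming from ${\bf T}$ and ${\bf T}^{\rm T}$ separately (each truncated at a different length $i-1$ and $N-i$) and tracking the convention $a_0=1$ at the boundary rows. An alternative route via the symbol of the Toeplitz matrix on the unit circle, writing the generating function as $(1-z)^2\Phi(z)$ with $\Phi(z)=\sum_{k\geq 0}(k+1)^{1-\alpha}z^k/\Gamma(2-\alpha)$, is conceptually appealing but technically more painful because $\Phi(e^{i\theta})$ only makes sense as an analytic continuation with branch-cut subtleties at $\theta=0$; I would only fall back on it if the diagonal-dominance argument failed to give the strict sign in every row.
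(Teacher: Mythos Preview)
Your argument is correct. The paper itself does not prove this lemma; it simply cites \cite{lin2018separable} and states the result, so there is no in-paper proof to compare against. Your strict-diagonal-dominance approach is a clean, self-contained substitute: the telescoping identity $\sum_{k=1}^{m}|l_k^{(\alpha)}|=(1-a_m)/\Gamma(2-\alpha)$ with $a_m=(m+1)^{1-\alpha}-m^{1-\alpha}>0$ is exactly the right tool, and the row-by-row bookkeeping you do (splitting the off-diagonal contribution into the two truncated sums of lengths $i-1$ and $N-i$) is accurate, including at the boundary rows where one sum is empty and the formula reduces via $a_0=1$. One cosmetic remark: $a_0=1$ is not a convention but the actual value, since $a_0=1^{1-\alpha}-0^{1-\alpha}=1$; you might phrase it that way. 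The alternative symbol-based route you mention is indeed the style of argument more commonly seen in the Toeplitz literature (and presumably closer to what the cited reference does), but your elementary dominance proof is shorter and avoids any analytic-continuation issues, so there is no reason to fall back on it here.
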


The following proposition holds obviously.
\begin{proposition}\label{wghtsumbdlem}
	For positive numbers $\xi_i$, $\zeta_i$ $(1\leq i\leq m)$,
	it obviously holds that
	\begin{equation*}
	\min\limits_{1\leq i\leq m}\frac{\xi_i}{\zeta_i}\leq\bigg(\sum\limits_{i=1}^{m}\zeta_i\bigg)^{-1}\bigg(\sum\limits_{i=1}^{m}\xi_i\bigg)\leq\max\limits_{1\leq i\leq m}\frac{\xi_i}{\zeta_i}.
	\end{equation*}
\end{proposition}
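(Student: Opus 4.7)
The plan is to reduce the inequality to the elementary fact that the ratios $\xi_i/\zeta_i$ all lie between their own minimum and maximum, and then to combine the resulting per-index estimates by summing across $i$.

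First I would introduce the shorthand $r_{\min}=\min_{1\leq i\leq m}\xi_i/\zeta_i$ and $r_{\max}=\max_{1\leq i\leq m}\xi_i/\zeta_i$. By definition, $r_{\min}\leq \xi_i/\zeta_i\leq r_{\max}$ for every index $i$. Because each $\zeta_i$ is strictly positive (this is where the positivity hypothesis is used, and it is the only place where sign considerations can go wrong), multiplying through by $\zeta_i$ preserves the inequality and yields $r_{\min}\zeta_i\leq \xi_i\leq r_{\max}\zeta_i$ for each $i$.

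The second step is to sum these inequalities over $i=1,\ldots,m$, giving $r_{\min}\sum_{i=1}^{m}\zeta_i\leq \sum_{i=1}^{m}\xi_i\leq r_{\max}\sum_{i=1}^{m}\zeta_i$. Since $\sum_{i=1}^{m}\zeta_i>0$, dividing through by this quantity produces exactly the two-sided bound in the statement.

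There is no substantive obstacle in this argument; it is the standard weighted-average sandwich estimate, which is why the authors preface the proposition with the word \emph{obviously}. The only point requiring any care is making explicit that all $\zeta_i$ are strictly positive, so that multiplication and the subsequent division do not reverse the inequalities, and this is already part of the hypothesis.
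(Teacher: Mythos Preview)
Your proof is correct and is exactly the standard argument one would supply; the paper itself gives no proof at all, merely stating that the proposition ``obviously holds.'' So you have in fact written out the details the authors chose to omit.
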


For any invertible matrix ${\bf B}\in\mathbb{R}^{k\times k}$, define its condition number $\kappa_2({\bf B})$ by $$\kappa_2({\bf B}):=||{\bf B}^{-1}||_2||{\bf B}||_2.$$

\begin{theorem}\label{mainthm}
	Condition number of the preconditioned matrix ${\bf P}_l^{-1}{\bf A}{\bf P}_r^{-1}$ is uniformly bounded by a constant independent of $N$ and $J$, i.e.,
	\begin{equation*}
	\sup\limits_{N,J}\kappa_2({\bf P}_l^{-1}{\bf A}{\bf P}_r^{-1})\leq \sigma_0,
	\end{equation*}
	where $\sigma_0=\frac{\hat{a}}{\check{a}}$ is a positive constant independent of $N$ and $J$.
\end{theorem}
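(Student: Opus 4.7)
The plan is to show $\kappa_2({\bf M})^2 \le (\hat a/\check a)^2$ by establishing the Loewner-order sandwich
\[
\tfrac{\check a}{\hat a}\,{\bf P}_l{\bf P}_l^T \;\preceq\; {\bf A}{\bf P}_r^{-2}{\bf A}^T \;\preceq\; \tfrac{\hat a}{\check a}\,{\bf P}_l{\bf P}_l^T.
\]
A congruence by ${\bf P}_l^{-1}$ on both sides turns this into $(\check a/\hat a)\,{\bf I} \preceq {\bf M}{\bf M}^T \preceq (\hat a/\check a)\,{\bf I}$, and the conclusion follows from $\kappa_2({\bf M})^2 = \lambda_{\max}({\bf M}{\bf M}^T)/\lambda_{\min}({\bf M}{\bf M}^T)$.

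Expanding both Kronecker expressions and using the factorization ${\bf L}_a = {\bf L}_1^{1/2}\tilde R{\bf L}_1^{1/2}$ with $\tilde R = {\bf L}_1^{-1/2}{\bf L}_a{\bf L}_1^{-1/2}$ SPD and $\check a{\bf I}\preceq\tilde R\preceq\hat a{\bf I}$ (Lemma~\ref{barlineqs}(ii)), one sees that the two matrices share the identical ${\bf T}{\bf T}^T$-piece $(\beta{\bf L}_1)^{-1}\otimes{\bf T}{\bf T}^T$. The ${\bf I}_N$-coefficients compare as ${\bf L}_a(\beta{\bf L}_1)^{-1}{\bf L}_a = \beta^{-1}{\bf L}_1^{1/2}\tilde R^2{\bf L}_1^{1/2}$ against $\beta{\bf L}_1$; the specific choice $\beta=\sqrt{\hat a\check a}$ was tailored exactly so that $\beta^{-2}\tilde R^2$ has eigenvalues in $[\check a/\hat a,\hat a/\check a]$, yielding the sharp Loewner bound $(\check a/\hat a)(\beta{\bf L}_1)\preceq{\bf L}_a(\beta{\bf L}_1)^{-1}{\bf L}_a\preceq(\hat a/\check a)(\beta{\bf L}_1)$ on this piece.

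For the full inequality I would test against an arbitrary ${\bf v}\in\mathbb R^{JN}$, reshape ${\bf v}=\mathrm{vec}({\bf V})$ with ${\bf V}\in\mathbb R^{N\times J}$, and express both quadratic forms ${\bf v}^T{\bf A}{\bf P}_r^{-2}{\bf A}^T{\bf v}$ and ${\bf v}^T{\bf P}_l{\bf P}_l^T{\bf v}$ as sums of three Frobenius-trace quantities $\xi_k$ and $\zeta_k$ $(k=1,2,3)$ corresponding to the ${\bf I}_N$, $({\bf T}+{\bf T}^T)$, and ${\bf T}{\bf T}^T$ pieces. Lemma~\ref{timematspd} together with invertibility of ${\bf T}$ (which follows from $l_0^{(\alpha)}>0$) guarantees that each $\zeta_k>0$ whenever ${\bf V}\ne{\bf 0}$. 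Provided each ratio $\xi_k/\zeta_k$ lies in $[\check a/\hat a,\hat a/\check a]$, Proposition~\ref{wghtsumbdlem} aggregates the three pieces into a single bound $\xi_1+\xi_2+\xi_3 \le (\hat a/\check a)(\zeta_1+\zeta_2+\zeta_3)$, giving the upper half of the sandwich; the lower half is symmetric.

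The hard part is the cross ($k=2$) term, where the ``coefficient'' ${\bf L}_a(\beta{\bf L}_1)^{-1}$ is \emph{non-symmetric} because ${\bf L}_a$ and ${\bf L}_1$ do not commute in general. After a cyclic-trace manipulation the ratio reduces to $\beta^{-1}\text{tr}(\tilde R\,\tilde{\bf N}_s)/\text{tr}(\tilde{\bf N}_s)$ where $\tilde{\bf N}_s$ is the symmetric part of ${\bf L}_1^{-1/2}{\bf V}^T{\bf T}{\bf V}{\bf L}_1^{1/2}$; it has strictly positive trace (again by Lemma~\ref{timematspd}) but may be indefinite. The delicate step is to confine this scalar ratio to $[\sqrt{\check a/\hat a},\sqrt{\hat a/\check a}]\subset[\check a/\hat a,\hat a/\check a]$ by exploiting the structural linkage between $\tilde{\bf N}_s$ and the ${\bf V}$-dependence of the other two pieces; this is where the proof must combine Lemma~\ref{barlineqs}, Lemma~\ref{timematspd}, and Proposition~\ref{wghtsumbdlem} most carefully, and any slack there is what prevents a bound smaller than $\hat a/\check a$ (tight when $a$ is constant, where ${\bf M}={\bf I}$).
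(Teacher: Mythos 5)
Your framing of the target is correct: proving the Loewner sandwich $(\check a/\hat a){\bf P}_l{\bf P}_l^{\rm T}\preceq{\bf A}{\bf P}_r^{-2}{\bf A}^{\rm T}\preceq(\hat a/\check a){\bf P}_l{\bf P}_l^{\rm T}$ suffices, and your handling of the ${\bf I}_N$-piece and the ${\bf T}{\bf T}^{\rm T}$-piece is sound. But your cross-term (the $k=2$ piece) is where the argument genuinely breaks, and you have correctly diagnosed the symptom without supplying a cure. The quantity $\mathrm{tr}\bigl(\tilde R\,\tilde{\bf N}_s\bigr)/\mathrm{tr}\bigl(\tilde{\bf N}_s\bigr)$ with $\tilde{\bf N}_s$ the symmetric part of the \emph{similarity} transform ${\bf L}_1^{-1/2}{\bf V}^{\rm T}{\bf T}{\bf V}\,{\bf L}_1^{1/2}$ is \emph{not} controllable by the spectrum of $\tilde R$: a similarity transform does not preserve positive semi-definiteness of the symmetric part, so $\tilde{\bf N}_s$ can be indefinite (as you note), and for an indefinite matrix with positive trace the ratio $\mathrm{tr}(\tilde R\,\tilde{\bf N}_s)/\mathrm{tr}(\tilde{\bf N}_s)$ can exceed $\lambda_{\max}(\tilde R)$ by an arbitrary margin. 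Equivalently, the symmetric part of $(\beta{\bf L}_1)^{-1}{\bf L}_a$ is not congruence-comparable to ${\bf I}$ in the way you need (the symmetric part of a product of two non-commuting SPD matrices can even have negative eigenvalues). Gesturing at a ``structural linkage'' between $\tilde{\bf N}_s$ and the other pieces does not close this hole; it is precisely the hard content of the theorem.

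The paper avoids the problem entirely by an algebraic factorization you did not attempt. Write ${\bf A}=\hat{\bf A}\bigl({\bf L}_a^{1/2}\otimes{\bf I}_N\bigr)$ with $\hat{\bf A}={\bf L}_a^{1/2}\otimes{\bf I}_N+{\bf L}_a^{-1/2}\otimes{\bf T}$. Then
\begin{equation*}
({\bf P}_l^{-1}{\bf A}{\bf P}_r^{-1})({\bf P}_l^{-1}{\bf A}{\bf P}_r^{-1})^{\rm T}
={\bf P}_l^{-1}\hat{\bf A}\,\Bigl[\bigl({\bf L}_a^{1/2}(\beta{\bf L}_1)^{-1}{\bf L}_a^{1/2}\bigr)\otimes{\bf I}_N\Bigr]\,\hat{\bf A}^{\rm T}{\bf P}_l^{-\rm T},
\end{equation*}
where the bracketed factor is similar to $({\bf L}_1^{-1/2}{\bf L}_a{\bf L}_1^{-1/2})\otimes\beta^{-1}{\bf I}_N$ and hence sandwiched between $\sqrt{\check a/\hat a}\,{\bf I}$ and $\sqrt{\hat a/\check a}\,{\bf I}$ by Lemma~\ref{barlineqs}(ii). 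The remaining comparison is between $\hat{\bf A}\hat{\bf A}^{\rm T}$ and ${\bf P}_l{\bf P}_l^{\rm T}$, and here is the payoff: the cross term of $\hat{\bf A}\hat{\bf A}^{\rm T}$ is exactly ${\bf I}_J\otimes({\bf T}+{\bf T}^{\rm T})$ -- the $a$-dependence cancels because ${\bf L}_a^{1/2}{\bf L}_a^{-1/2}={\bf I}_J$ -- and this is \emph{identical} to the cross term of ${\bf P}_l{\bf P}_l^{\rm T}$. So the three termwise ratios fed into Proposition~\ref{wghtsumbdlem} are ${\bf L}_a$ vs.\ $\beta{\bf L}_1$, $1$ vs.\ $1$, and ${\bf L}_a^{-1}$ vs.\ $(\beta{\bf L}_1)^{-1}$, all of which are genuine symmetric Loewner comparisons controlled by Lemmas~\ref{barlineqs} and~\ref{spdneqs}. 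Your version tries to compare ${\bf A}{\bf P}_r^{-2}{\bf A}^{\rm T}$ with ${\bf P}_l{\bf P}_l^{\rm T}$ termwise without this factorization, which leaves an irreducibly non-symmetric coefficient ${\bf L}_a(\beta{\bf L}_1)^{-1}$ in the cross term -- the obstruction you ran into is not an artifact of a missing estimate but of the missing factorization.
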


\begin{proof} 	
	Denote $\hat{\bf A}={\bf L}_a^{\frac{1}{2}}\otimes{\bf I}_N+{\bf L}_a^{-\frac{1}{2}}\otimes{\bf T}$. Recall that ${\bf P}_r=(\beta{\bf L}_{1})^{\frac{1}{2}}\otimes{\bf I}_N$. Then, it is clear that 
	\begin{align}
	&{\bf A}=\hat{\bf A}({\bf L}_a^{\frac{1}{2}}\otimes{\bf I}_N),\notag\\
	&({\bf P}_l^{-1}{\bf A}{\bf P}_r^{-1})({\bf P}_l^{-1}{\bf A}{\bf P}_r^{-1})^{\rm T}={\bf P}_l^{-1}\hat{\bf A}[({\bf L}_a^{\frac{1}{2}}(\beta{\bf L}_{1})^{-1}{\bf L}_a^{\frac{1}{2}})\otimes{\bf I}_N]\hat{\bf A}^{\rm T}{\bf P}_l^{-\rm T}.\notag
	\end{align}
	Moreover, it is easy to see that $({\bf L}_a^{\frac{1}{2}}{\bf L}_{1}^{-1}{\bf L}_a^{\frac{1}{2}})\otimes{\bf I}_N$ is similar to $({\bf L}_{1}^{-\frac{1}{2}}{\bf L}_a{\bf L}_{1}^{-\frac{1}{2}})\otimes{\bf I}_N$, which together with Lemma \ref{barlineqs}${\bf (ii)}$ implies that
	\begin{equation*}
	\sqrt{\frac{\check{a}}{\hat{a}}}{\bf I}_{NJ}=\frac{\check{a}}{\beta}{\bf I}_{NJ}\preceq({\bf L}_a^{\frac{1}{2}}(\beta{\bf L}_{1})^{-1}{\bf L}_a^{\frac{1}{2}})\otimes{\bf I}_N\preceq\frac{\hat{a}}{\beta}{\bf I}_{NJ}=\sqrt{\frac{\hat{a}}{\check{a}}}{\bf I}_{NJ}.
	\end{equation*}
    \textcolor{black}{ Here, ${\bf I}_{NJ}$ denotes the $(NJ)\times (NJ)$ identity matrix.}
	
	Hence,
	\begin{align}\label{part1esti}
	\sqrt{\frac{\check{a}}{\hat{a}}}{\bf P}_l^{-1}\hat{\bf A}\hat{\bf A}^{\rm T}{\bf P}_l^{-\rm T}\preceq({\bf P}_l^{-1}{\bf A}{\bf P}_r^{-1})({\bf P}_l^{-1}{\bf A}{\bf P}_r^{-1})^{\rm T}\preceq\sqrt{\frac{\hat{a}}{\check{a}}}{\bf P}_l^{-1}\hat{\bf A}\hat{\bf A}^{\rm T}{\bf P}_l^{-\rm T}.
	\end{align}
	It thus remains to estimate Rayleigh quotient of ${\bf P}_l^{-1}\hat{\bf A}\hat{\bf A}^{\rm T}{\bf P}_l^{-\rm T}$. Let ${\bf z}\in\mathbb{R}^{ J N\times 1}$ \textcolor{black}{denote} any non-zero vector. Then,
	\begin{align}
	\frac{{\bf z}^{\rm T}{\bf P}_l^{-1}\hat{\bf A}\hat{\bf A}^{\rm T}{\bf P}_l^{-\rm T}{\bf z}}{{\bf z}^{\rm T}{\bf z}}&\stackrel{{\bf y}={\bf P}_l^{-\rm T}{\bf z}}{=\joinrel=\joinrel=\joinrel=\joinrel=\joinrel=}\frac{{\bf y}^{\rm T}\hat{\bf A}\hat{\bf A}^{\rm T}{\bf y}}{{\bf y}^{\rm T}{\bf P}_l{\bf P}_l^{\rm T}{\bf y}}\notag\\
	&=\frac{{\bf y}^{\rm T}[{\bf L}_a\otimes{\bf I}_N+{\bf I}_{ J }\otimes({\bf T}+{\bf T}^{\rm T})+{\bf L}_a^{-1}\otimes({\bf T}{\bf T}^{\rm T})]{\bf y}}{{\bf y}^{\rm T}[(\beta{\bf L}_{1})\otimes{\bf I}_N+{\bf I}_{ J }\otimes({\bf T}+{\bf T}^{\rm T})+(\beta{\bf L}_{1})^{-1}\otimes({\bf T}{\bf T}^{\rm T})]{\bf y}},\label{part2preesti}
	\end{align}
	By Lemma \ref{timematspd}, we know that ${\bf T}+{\bf T}^{\rm T}\succ{\bf O}$. Since ${\bf T}$ is a lower triangular matrix with its diagonal entries all equal to $l_0^{(\alpha)}>0$, ${\bf T}$ is invertible and thus ${\bf T}{\bf T}^{\rm T}\succ{\bf O}$. That means the matrices appearing in the numerator and \textcolor{black}{the denominator} of right hand side of \eqref{part2preesti} are all positive definite. Thus, Proposition \ref{wghtsumbdlem} is applicable to estimating \eqref{part2preesti}.
	
	By Lemma \ref{barlineqs}${\bf (i)}$,
	\begin{equation}\label{firstterm}
	\sqrt{\frac{\check{a}}{\hat{a}}}=\frac{\check{a}}{\beta}\leq\frac{{\bf y}^{\rm T}({\bf L}_a\otimes{\bf I}_N){\bf y}}{{\bf y}^{\rm T}[(\beta{\bf L}_{1})\otimes{\bf I}_N]{\bf y}}\leq\frac{\hat{a}}{\beta}=\sqrt{\frac{\hat{a}}{\check{a}}}.
	\end{equation}
	By Lemma \ref{barlineqs}${\bf (i)}$ and Lemma \ref{spdneqs},
	\begin{align}
	\sqrt{\frac{\check{a}}{\hat{a}}}=\frac{\beta}{\hat{a}}=\frac{{\bf y}^{\rm T}[\hat{a}^{-1}{\bf L}_{1}^{-1}\otimes({\bf T}{\bf T}^{\rm T})]{\bf y}}{{\bf y}^{\rm T}[(\beta{\bf L}_{1})^{-1}\otimes({\bf T}{\bf T}^{\rm T})]{\bf y}}&\leq\frac{{\bf y}^{\rm T}[{\bf L}_a^{-1}\otimes({\bf T}{\bf T}^{\rm T})]{\bf y}}{{\bf y}^{\rm T}[(\beta{\bf L}_{1})^{-1}\otimes({\bf T}{\bf T}^{\rm T})]{\bf y}}\notag\\
	&\leq\frac{{\bf y}^{\rm T}[\check{a}^{-1}{\bf L}_{1}^{-1}\otimes({\bf T}{\bf T}^{\rm T})]{\bf y}}{{\bf y}^{\rm T}[(\beta{\bf L}_{1})^{-1}\otimes({\bf T}{\bf T}^{\rm T})]{\bf y}}=\frac{\beta}{\check{a}}=\sqrt{\frac{\hat{a}}{\check{a}}}.\label{thirdterm}
	\end{align}
	Applying Proposition \ref{wghtsumbdlem} to \eqref{part2preesti}, \eqref{firstterm} and \eqref{thirdterm}, we obtain that
	\begin{equation*}
	\sqrt{\frac{\check{a}}{\hat{a}}}=\min\left\{\frac{\check{a}}{\beta},1,\frac{\beta}{\hat{a}}\right\}\leq\frac{{\bf z}^{\rm T}{\bf P}_l^{-1}\hat{\bf A}\hat{\bf A}^{\rm T}{\bf P}_l^{-\rm T}{\bf z}}{{\bf z}^{\rm T}{\bf z}}\leq\max\left\{\frac{\hat{a}}{\beta},1,\frac{\beta}{\check{a}}\right\}=\sqrt{\frac{\hat{a}}{\check{a}}},
	\end{equation*}
	which together with \eqref{part1esti} implies that
	\begin{equation}\label{singvalsesti}
	\frac{\check{a}}{\hat{a}}{\bf I}_{N J }\preceq({\bf P}_l^{-1}{\bf A}{\bf P}_r^{-1})({\bf P}_l^{-1}{\bf A}{\bf P}_r^{-1})^{\rm T}\preceq	\frac{\hat{a}}{\check{a}}{\bf I}_{N J }.
	\end{equation}
	\eqref{singvalsesti} implies that
	\begin{align*}
	\kappa_2({\bf P}_l^{-1}{\bf A}{\bf P}_r^{-1})&\leq\sqrt{	\left(\frac{\hat{a}}{\check{a}}\right)\Bigg/\left(\frac{\check{a}}{\hat{a}}\right)}=\frac{\hat{a}}{\check{a}}=\sigma_0.
	\end{align*}
	The proof is complete.
\end{proof}

With the condition number estimation in Theorem \ref{mainthm}, one can immediately get the following corollary.
\begin{corollary}\textnormal{(see \cite[Theorem 38.5]{trefethen1997numerical})}\label{maincoroll}
	The normalized conjugate gradient (NCG) solver for the preconditioned system \eqref{twosidedpreclinsys} has a linear convergence rate $\frac{\sigma_0-1}{\sigma_0+1}<1$ with $\sigma_0$ given in Theorem \ref{mainthm}, which is independent of $N$ and $J$.
\end{corollary}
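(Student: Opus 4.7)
The plan is to treat this corollary as a direct consequence of the condition-number bound from Theorem \ref{mainthm} combined with the textbook convergence estimate for conjugate gradient (CG) iteration, which is precisely the cited result \cite[Theorem 38.5]{trefethen1997numerical}. The only subtlety is that the preconditioned matrix $M := {\bf P}_l^{-1}{\bf A}{\bf P}_r^{-1}$ is non-symmetric (because ${\bf T}$ is lower triangular), so plain CG does not apply; NCG refers to CG applied to the normal-equation form $M^{\rm T}M\hat{\bf u} = M^{\rm T}{\bf P}_l^{-1}{\bf f}$, whose system matrix $M^{\rm T}M$ is symmetric positive definite.

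First I would recall that the spectral condition number of $M^{\rm T}M$ is the square of that of $M$, i.e.
\begin{equation*}
\kappa_2(M^{\rm T}M) = \kappa_2(M)^2.
\end{equation*}
By Theorem \ref{mainthm} we have $\kappa_2(M)\leq \sigma_0 = \hat{a}/\check{a}$, so $\kappa_2(M^{\rm T}M)\leq \sigma_0^2$, uniformly in $N$ and $J$.

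Next I would invoke the standard CG error bound: if CG is applied to a symmetric positive definite linear system with coefficient matrix of condition number $\kappa$, then after $k$ iterations the $A$-norm of the error is bounded by $2\bigl((\sqrt{\kappa}-1)/(\sqrt{\kappa}+1)\bigr)^k$ times the initial $A$-norm of the error, yielding asymptotic linear convergence with rate $(\sqrt{\kappa}-1)/(\sqrt{\kappa}+1)$. Applying this with $\kappa = \kappa_2(M^{\rm T}M) \leq \sigma_0^2$ gives linear convergence rate at most
\begin{equation*}
\frac{\sqrt{\sigma_0^2}-1}{\sqrt{\sigma_0^2}+1} = \frac{\sigma_0-1}{\sigma_0+1},
\end{equation*}
which is the desired bound. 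Since $\sigma_0 = \hat{a}/\check{a}$ depends only on the coefficient bounds of $a({\bf x})$ and not on $N$ or $J$, this convergence rate is likewise independent of the discretization parameters.

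There is no real obstacle here: the main idea is the identity $\kappa_2(M^{\rm T}M) = \kappa_2(M)^2$, after which the cited convergence theorem delivers the bound $(\sigma_0-1)/(\sigma_0+1)<1$ immediately. It is worth remarking in the write-up that $M$ is invertible (ensuring $M^{\rm T}M\succ{\bf O}$, so the CG theorem applies), which follows from the fact that both ${\bf P}_l$ and ${\bf P}_r$ are invertible—${\bf P}_r$ is a Kronecker product of invertible factors, and ${\bf P}_l$ is invertible because $\kappa_2(M) \leq \sigma_0 < \infty$ already implies nonsingularity of $M$, equivalently of ${\bf A}$ acted on by invertible preconditioners.
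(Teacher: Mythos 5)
Your proposal is correct and matches exactly what the paper intends: the corollary is an immediate consequence of Theorem~\ref{mainthm} and the textbook CG bound, where NCG is CG applied to the normal equations. The key observation, $\kappa_2(M^{\rm T}M)=\kappa_2(M)^2\leq\sigma_0^2$ (equivalently for $MM^{\rm T}$, which is what Theorem~\ref{mainthm} actually bounds via \eqref{singvalsesti}), followed by the standard rate $(\sqrt{\kappa}-1)/(\sqrt{\kappa}+1)$, delivers $(\sigma_0-1)/(\sigma_0+1)$ exactly as the paper implies by citing \cite[Theorem 38.5]{trefethen1997numerical}.
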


\section{The Implementation}\label{implementsection}
In this section, we propose a fast implementation of Krylov subspace solver for solving the preconditioned system \eqref{twosidedpreclinsys}. To fast implement a Krylov subspace solver, it suffices to fast implement the underlying matrix-vector product. In other words, we will discuss in this section how to fast compute a matrix-vector product ${\bf P}_l^{-1}{\bf A}{\bf P}_r^{-1}{\bf v}$ for an arbitrarily given vector ${\bf v}$. Also, the fast computation of \eqref{scalling} with given $\hat{\bf u}$ will be discussed in \textcolor{black}{this} section.

We firstly introduce some preliminaries.
	
\textcolor{black}{Recall that the physical domain $\Omega=\prod\limits_{i=1}^{d}(\check{c}_i,\hat{c}_i)$ and the spatial operator is discretized by \textcolor{black}{the} central difference scheme on uniform grid.} Partition the interval $[\check{c}_i,\hat{c}_i]$ into $m_i$ uniform sub-intervals \textcolor{black}{with} $h_i=(\hat{c_i}-\check{c}_i)/(m_i+1)$ as the stepsize. Clearly, $J=\prod\limits_{i=1}^{d}m_i$. Denote
\begin{align*}
&m_1^{-}=m_d^{+}=1,\quad m_i^{-}=\prod\limits_{j=1}^{i-1}m_j,\quad m_i^{+}=\prod\limits_{j=i+1}^{d}m_j,\quad  i=2,3,...,d-1,\\
&{\bf W}_m=\left[
\begin{array}
	[c]{ccccc}
	2&-1 &   &  & \\
	-1& 2&-1  &&\\
	~&\ddots &\ddots&\ddots&~\\
	~ & ~&-1&2 &-1\\
	~& ~&~& -1 & 2
\end{array}
\right]\in\mathbb{R}^{m\times m}.
\end{align*}

Then, it is well-known that
\begin{align*}
{\bf L}_1=\sum\limits_{i=1}^{d}{\bf I}_{m_i^{-}}\otimes(h_i^{-2}{\bf W}_{m_i})\otimes{\bf I}_{m_i^{+}}.
\end{align*}

Denote
\begin{equation}\label{1dsinemat}
	{\bf S}_m:=\sqrt{\frac{2}{m+1}}\left[\sin\left(\frac{ij\pi}{m+1}\right)\right]_{i,j=1}^{m}.
\end{equation}
${\bf S}_m$ is called $m\times m$ one-dimension sine transform matrix. 

Some properties of ${\bf S}_m$ is given in the following proposition.
\begin{proposition}\label{smprop}
\begin{description}
\item[(i)] For any $m$, ${\bf S}_m$ is real orthogonal and symmetric, i.e., ${\bf S}_m{\bf S}_m^{\rm T}={\bf I}_m$ and ${\bf S}_m={\bf S}_{m}^{\rm T}$.
\item[(ii)]For any $m$, ${\bf W}_m$ is diagonalizable by ${\bf S}_m$, i.e.,
\begin{equation*}
{\bf W}_m={\bf S}_m{\bf D}_m{\bf S}_m^{\rm T},\quad {\bf D}_m={\rm diag}\left(4\sin^2\left(\frac{i\pi}{2(m+1)}\right)\right)_{i=1}^{m}.
\end{equation*}
\end{description}
\end{proposition}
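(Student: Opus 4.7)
The plan is to treat the two claims separately, since each reduces to a short trigonometric computation.

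For part (i), symmetry is immediate from the defining entries: $({\bf S}_m)_{ij} = \sqrt{2/(m+1)}\sin(ij\pi/(m+1))$ is visibly invariant under the swap $i\leftrightarrow j$. For orthogonality it suffices to evaluate $({\bf S}_m{\bf S}_m^{\rm T})_{ij} = \frac{2}{m+1}\sum_{k=1}^{m}\sin(ik\pi/(m+1))\sin(jk\pi/(m+1))$ and show it equals $\delta_{ij}$. I would apply the product-to-sum identity $2\sin a\sin b = \cos(a-b)-\cos(a+b)$ to rewrite the inner sum as a difference of two cosine sums $\sum_{k=1}^{m}\cos(\ell k\pi/(m+1))$ with $\ell = i\pm j$. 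Each such sum can be evaluated cleanly by extending the index range to $k=0,\dots,m+1$ (the added terms contribute a known boundary correction because $\sin(0)=\sin(i\pi)=0$ and $\cos(0)=1$, $\cos((m+1)\ell\pi/(m+1)) = (-1)^\ell$) and summing the resulting geometric series in $e^{i\ell\pi/(m+1)}$. Collecting cases $\ell\in\{1,\dots,2m\}\setminus\{m+1\}$ and $\ell=0$ yields the desired orthonormality after multiplication by the normalization $2/(m+1)$.

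For part (ii), the cleanest route is to verify directly that the $i$-th column ${\bf s}_i$ of ${\bf S}_m$ is an eigenvector of ${\bf W}_m$ with eigenvalue $4\sin^2(i\pi/(2(m+1)))$. For an interior row $2\le k\le m-1$,
\begin{equation*}
({\bf W}_m{\bf s}_i)_k = \sqrt{\tfrac{2}{m+1}}\bigl[-\sin\bigl(\tfrac{(k-1)i\pi}{m+1}\bigr)+2\sin\bigl(\tfrac{ki\pi}{m+1}\bigr)-\sin\bigl(\tfrac{(k+1)i\pi}{m+1}\bigr)\bigr],
\end{equation*}
and the sum-to-product identity $\sin((k-1)\theta)+\sin((k+1)\theta) = 2\sin(k\theta)\cos\theta$ with $\theta = i\pi/(m+1)$ collapses this to $(2-2\cos\theta)\,({\bf s}_i)_k = 4\sin^2(\theta/2)\,({\bf s}_i)_k$. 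For the boundary rows $k=1$ and $k=m$ the same identity is valid, since the "missing" neighbour terms $\sin(0)$ and $\sin((m+1)i\pi/(m+1))=\sin(i\pi)$ are both zero; hence the Dirichlet boundary conditions encoded in ${\bf W}_m$ are automatically compatible with the sine basis. Since (i) gives ${\bf S}_m^{\rm T}={\bf S}_m^{-1}$, this yields the factorization ${\bf W}_m = {\bf S}_m{\bf D}_m{\bf S}_m^{\rm T}$.

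The only nontrivial bookkeeping is in part (i): one must be careful when $i+j = m+1$ (so that $\cos((i+j)k\pi/(m+1)) = (-1)^k$) and distinguish the diagonal case $i=j$ from the off-diagonal case. Handling these uniformly via the geometric-series approach, rather than case-by-case, is the piece of the argument most prone to sign errors, and I would use the extended-range trick above to keep the computation in one step.
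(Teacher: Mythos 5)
Your proof is correct, and it fills in exactly the ``straightforward calculation'' that the paper itself leaves to the reader: the paper's proof of this proposition is the single sentence ``It can be proven by straight forward calculation.'' Your two-part argument --- product-to-sum plus a geometric series for the orthonormality in (i), and the sum-to-product recurrence with the vanishing boundary terms $\sin(0)=\sin(i\pi)=0$ for the eigenvector verification in (ii) --- is the standard computation the authors had in mind, and your remark about the $i+j=m+1$ bookkeeping correctly flags the one place where a careless case split goes wrong.
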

\begin{proof}
It can be proven by straight forward calculation.
\end{proof}

Define $d$-dimension sine transform matrix as
\begin{equation*}
	{\bf Q}:={\bf S}_{m_1}\otimes{\bf S}_{m_2}\otimes\cdots\otimes{\bf S}_{m_d}.
\end{equation*}

Proposition \ref{smprop}${\bf (i)}$ immediately implies Proposition \ref{fstmatprop}.
\begin{proposition}\label{fstmatprop}
	${\bf Q}$ is real orthogonal and symmetric, i.e., ${\bf Q}^{\rm T}{\bf Q}={\bf I}_{J}$ and ${\bf Q}={\bf Q}^{\rm T}$.
\end{proposition}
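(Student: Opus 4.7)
The plan is straightforward: this proposition is a direct consequence of Proposition \ref{smprop}(i) together with two basic algebraic identities for Kronecker products, namely the transpose rule $(A_1\otimes\cdots\otimes A_d)^{\rm T}=A_1^{\rm T}\otimes\cdots\otimes A_d^{\rm T}$ and the mixed-product rule $(A_1\otimes\cdots\otimes A_d)(B_1\otimes\cdots\otimes B_d)=(A_1B_1)\otimes\cdots\otimes(A_dB_d)$, which hold whenever the factor dimensions are compatible.

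First I would establish the symmetry ${\bf Q}={\bf Q}^{\rm T}$. Applying the transpose rule to the definition ${\bf Q}={\bf S}_{m_1}\otimes{\bf S}_{m_2}\otimes\cdots\otimes{\bf S}_{m_d}$ gives ${\bf Q}^{\rm T}={\bf S}_{m_1}^{\rm T}\otimes{\bf S}_{m_2}^{\rm T}\otimes\cdots\otimes{\bf S}_{m_d}^{\rm T}$, and then invoking Proposition \ref{smprop}(i), which asserts ${\bf S}_{m_i}^{\rm T}={\bf S}_{m_i}$ for every $i$, yields ${\bf Q}^{\rm T}={\bf Q}$. The fact that ${\bf Q}$ is real is immediate, since each ${\bf S}_{m_i}$ has real entries (see \eqref{1dsinemat}) and real-valuedness is preserved under Kronecker products.

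Next, for orthogonality, I would compute ${\bf Q}^{\rm T}{\bf Q}$ using the mixed-product rule factor by factor: ${\bf Q}^{\rm T}{\bf Q}=({\bf S}_{m_1}^{\rm T}{\bf S}_{m_1})\otimes({\bf S}_{m_2}^{\rm T}{\bf S}_{m_2})\otimes\cdots\otimes({\bf S}_{m_d}^{\rm T}{\bf S}_{m_d})$. By Proposition \ref{smprop}(i), each factor equals ${\bf I}_{m_i}$, so ${\bf Q}^{\rm T}{\bf Q}={\bf I}_{m_1}\otimes{\bf I}_{m_2}\otimes\cdots\otimes{\bf I}_{m_d}={\bf I}_{J}$, where the last equality uses $J=\prod_{i=1}^{d}m_i$.

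There is no real obstacle here; the proposition is purely a bookkeeping statement about how the properties of the one-dimensional sine transform matrices propagate through the tensor product. The only thing worth being careful about is to cite the mixed-product and transpose identities explicitly (or at least to note that they are standard) so that the chain of equalities is transparent to the reader.
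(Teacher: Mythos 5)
Your proof is correct and follows exactly the route the paper has in mind: the paper simply asserts that Proposition \ref{smprop}\textbf{(i)} ``immediately implies'' Proposition \ref{fstmatprop}, and you have filled in the immediate implication using the standard transpose and mixed-product rules for Kronecker products. No issues.
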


Moreover, Proposition \ref{smprop} also implies that
\begin{equation}\label{l1diagform}
{\bf L}_1={\bf Q}{\bf \Lambda}{\bf Q}^{\rm T}={\bf Q}{\bf \Lambda}{\bf Q},
\end{equation}
with
\begin{equation*}
{\bf \Lambda}=\sum\limits_{i=1}^{d}{\bf I}_{m_i^{-}}\otimes(h_i^{-2}{\bf D}_{m_i})\otimes{\bf I}_{m_i^{+}}.
\end{equation*}
Clearly, ${\bf \Lambda}$ is a positive definite diagonal matrix with its diagonal entries explicitly known.

By Proposition \ref{fstmatprop} and \eqref{l1diagform}, we know that
\begin{equation*}
{\bf P}_r^{-1}=({\bf Q}\otimes{\bf I}_N)(\beta^{-\frac{1}{2}}{\bf \Lambda}^{-\frac{1}{2}}\otimes {\bf I}_N)({\bf Q}\otimes{\bf I}_N).
\end{equation*} 
%
\begin{lemma}\textnormal{(see \cite[Algorithm 1.4.2]{goluvan2013})}\label{1dsineimplemt}
	For any positive integer $m$ and an arbitrarily given ${\bf y}\in\mathbb{R}^{m\times 1}$, the computation of the matrix-vector product ${\bf S}_m{\bf y}$ requires $\mathcal{O}(m\log m)$ flops. 
\end{lemma}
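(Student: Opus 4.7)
The plan is to reduce the discrete sine transform (DST) to a single fast Fourier transform (FFT) of a length-$2(m+1)$ auxiliary vector and then invoke the standard $\mathcal{O}(M\log M)$ cost of the FFT of a vector of length $M$. Since this result is quoted from Golub--Van Loan, my goal is only to outline the reduction that justifies the cited complexity, not to rederive the Cooley--Tukey algorithm.

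First I would build the odd-symmetric extension. Given ${\bf y}=(y_1,\ldots,y_m)^{\rm T}\in\mathbb{R}^{m}$, define $\tilde{\bf y}\in\mathbb{R}^{2(m+1)}$ by
\begin{equation*}
\tilde{y}_0=0,\quad \tilde{y}_j=y_j\ (1\le j\le m),\quad \tilde{y}_{m+1}=0,\quad \tilde{y}_{2(m+1)-j}=-y_j\ (1\le j\le m).
\end{equation*}
This extension costs only $\mathcal{O}(m)$ flops. Next I would apply the length-$2(m+1)$ FFT to $\tilde{\bf y}$, which by the Cooley--Tukey algorithm costs $\mathcal{O}\bigl(2(m+1)\log(2(m+1))\bigr)=\mathcal{O}(m\log m)$ flops and returns the vector $\hat{\bf y}$ with entries $\hat{y}_k=\sum_{j=0}^{2m+1}\tilde{y}_j e^{-2\pi\ii jk/(2(m+1))}$ for $k=0,1,\ldots,2m+1$.

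Then I would verify that the $k$-th imaginary part of $\hat{\bf y}$ reproduces, up to the scaling constant $\sqrt{2/(m+1)}$ appearing in \eqref{1dsinemat}, the $k$-th entry of ${\bf S}_m{\bf y}$. Using the odd symmetry $\tilde{y}_{2(m+1)-j}=-\tilde{y}_j$ together with $\tilde{y}_0=\tilde{y}_{m+1}=0$, the cosine parts telescope to zero and the sine parts pair up to give
\begin{equation*}
\hat{y}_k=-2\ii\sum_{j=1}^{m}y_j\sin\!\left(\tfrac{jk\pi}{m+1}\right),\qquad k=1,2,\ldots,m,
\end{equation*}
so a scaling by $-\tfrac{1}{2\ii}\sqrt{2/(m+1)}$ of the first $m$ nontrivial entries recovers ${\bf S}_m{\bf y}$ in another $\mathcal{O}(m)$ flops. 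Adding the costs yields the claimed $\mathcal{O}(m\log m)$ bound.

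The only delicate step is the index-bookkeeping in the third paragraph, namely checking that the symmetry of $\tilde{\bf y}$ really kills the cosine contribution at every frequency $k\in\{1,\ldots,m\}$ and that the surviving imaginary parts are in one-to-one correspondence with the DST entries. Everything else is either a one-time rearrangement of $\mathcal{O}(m)$ entries or a direct appeal to the standard FFT complexity, so there is no substantive obstacle beyond this accounting.
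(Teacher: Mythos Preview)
Your proposal is correct and mirrors the paper's own proof almost exactly: the paper also forms the odd extension $[0;{\bf y};0;-{\bf y}(m);-{\bf y}(m-1);\ldots;-{\bf y}(1)]\in\mathbb{R}^{2(m+1)}$ and applies a single FFT to it. The only nuance is that the paper invokes the chirp $z$-transform rather than Cooley--Tukey to justify the $\mathcal{O}(n\log n)$ FFT cost for an arbitrary length $n=2(m+1)$, since $m$ is not assumed to make $2(m+1)$ highly composite.
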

\begin{proof}
	Actually, \cite[Algorithm 1.4.2]{goluvan2013} computes ${\bf S}_m{\bf y}$ by acting a fast Fourier transform (FFT) on an extended vector $[0;{\bf y};0;-{\bf y}(m);-{\bf y}(m-1);\cdots;-{\bf y}(1)]\in\mathbb{R}^{2(m+1)\times 1}$. Meanwhile, FFT of a length-$n$  vector requires $\mathcal{O}(n\log n)$ flops; see, e.g., the chirp $z$-transform \cite{rabiner1969chirp}. This is how we obtain Lemma \ref{1dsineimplemt}.
\end{proof}

\textcolor{black}{By properties of Kronecker product and Lemma \ref{1dsineimplemt}, we know that the computation of  ${\bf P}_r^{-1}{\bf v}$  for a given vector  ${\bf v}\in\mathbb{R}^{NJ\times 1}$ requires $\mathcal{O}(NJ\log J)$ flops. That means the computation of \eqref{scalling} requires $\mathcal{O}(NJ\log J)$ flops once $\hat{\bf u}$ is given.}

In what follows, we discuss the fast computation of a matrix-vector product $\hat{\bf v}={\bf P}_l^{-1}{\bf A}{\bf P}_r^{-1}{\bf v}$ for a given vector ${\bf v}\in\mathbb{R}^{NJ\times 1}$. Clearly, the computation of $\hat{\bf v}$ can be divided into the following three steps.
\begin{align}
&{\rm Step~1}: \quad  {\rm Compute~}\dot{\bf v}={\bf P}_r^{-1}{\bf v},\label{step1}\\
&{\rm Step~2}: \quad  {\rm Compute~} \ddot{\bf v}={\bf A}\dot{\bf v},\label{step2}\\
&{\rm Step~3}: \quad  {\rm Compute~}\hat{\bf v}={\bf P}_l^{-1}\ddot{\bf v}.\label{step3}
\end{align}
\textcolor{black}{Similar to the above discussion for computation of \eqref{scalling}, we know that the computation of \eqref{step1} requires $\mathcal{O}(NJ\log J)$ flops. It remains to discuss the computation of \eqref{step2} and \eqref{step3}.}

\textcolor{black}{Notice that $\ddot{\bf v}={\bf A}\dot{\bf v}=({\bf L}_a\otimes{\bf I}_N)\dot{\bf v}+({\bf I}_J\otimes{\bf T})\dot{\bf v}$. It is clear that ${\bf L}_a$ is a sparse matrix with $\mathcal{O}(J)$ nonzero entries. Moreover, ${\bf T}$ is a Toeplitz matrix  whose matrix-vector product requires $\mathcal{O}(N\log N)$ flops; see, e.g., \cite{mng2004}. By properties of the Kronecker product, we know that the computation of \eqref{step2} requires $\mathcal{O}(JN\log N)$ flops.}

It remains to discuss the fast computation of \eqref{step3}. Rewrite ${\bf \Lambda}$ in \eqref{l1diagform} as
\begin{equation*}
{\bf \Lambda}=\diag(\lambda_i)_{i=1}^{J}.
\end{equation*}
Clearly, $\lambda_i$'s ($i=1,2,...,J$) are all positive numbers by Proposition \ref{smprop}${\bf (ii)}$. By \eqref{l1diagform} and Lemma \ref{fstmatprop}, we know that
\begin{equation}\label{step3decomp}
{\bf P}_l^{-1}\ddot{\bf v}=({\bf Q}\otimes{\bf I}_N)\textcolor{black}{\blockdiag}({\bf T}_i^{-1})_{i=1}^{ J }({\bf Q}\otimes{\bf I}_N)\ddot{\bf v},
\end{equation}
where
\begin{equation*}
{\bf T}_i=(\beta{\lambda}_i)^{-\frac{1}{2}}{\bf T}+(\beta{\lambda}_i)^{\frac{1}{2}}{\bf I}_N,\quad i= 1,2,...,J.
\end{equation*}
Then, the computation of \eqref{step3} is equivalent to the following three sub-steps
\begin{align}
&\ddot{\bf v}^{\prime}=({\bf Q}\otimes{\bf I}_N)\ddot{\bf v},\label{step31}\\
&\ddot{\bf v}^{\prime\prime}=\textcolor{black}{\blockdiag}({\bf T}_i^{-1})_{i=1}^{ J }\ddot{\bf v}^{\prime},\label{step32}\\
&\hat{\bf v}=({\bf Q}\otimes{\bf I}_N)\ddot{\bf v}^{\prime\prime}.\label{step33}
\end{align}
\textcolor{black}{By properties of the Kronecker product and Lemma \ref{1dsineimplemt}, the computation of \eqref{step31} and \eqref{step33} requires $\mathcal{O}(NJ\log J)$ flops. It remains to discuss the fast computation of \eqref{step32}. The matrix in \eqref{step32} is a block diagonal matrix with inverse of ${\bf T}_i$ ($1\leq i\leq J$) as diagonal blocks. Notice that these ${\bf T}_i$'s are all invertible lower triangular Toeplitz (ILTT) matrices. Hence, for fast computation of \eqref{step32}, it suffices to show that for any $n\times n$ ILTT matrix ${\bf G}$ and an arbitrarily given vector ${\bf y}\in\mathbb{R}^{n\times 1}$, the matrix-vector product ${\bf G}^{-1}{\bf y}$ can be fast computed.
\begin{lemma}\textnormal{(see \cite{commenges1984fast})}\label{invlttlm}
	For any $n\times n $ ILTT matrix ${\bf G}$, its inverse ${\bf G}^{-1}$ is also an ILTT matrix and ${\bf G}^{-1}(:,1)$ can be computed from ${\bf G}(:,1)$ within $\mathcal{O}(n\log n)$ flops.
\end{lemma}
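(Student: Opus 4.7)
The plan is to identify $n\times n$ lower triangular Toeplitz matrices with the polynomial quotient ring $\mathbb{R}[z]/(z^n)$ via the correspondence
\begin{equation*}
{\bf G}=g_0{\bf I}_n+g_1{\bf Z}+g_2{\bf Z}^2+\cdots+g_{n-1}{\bf Z}^{n-1}\ \longleftrightarrow\ g(z)=g_0+g_1z+\cdots+g_{n-1}z^{n-1},
\end{equation*}
where ${\bf Z}\in\mathbb{R}^{n\times n}$ is the nilpotent lower shift matrix (ones on the first sub-diagonal, zeros elsewhere), and ${\bf G}(:,1)=(g_0,g_1,\ldots,g_{n-1})^{\rm T}$. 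Since ${\bf Z}^n={\bf O}$, this map is a ring isomorphism onto $\mathbb{R}[z]/(z^n)$. Under it, matrix product corresponds to polynomial product modulo $z^n$, and the first column of a product is the coefficient vector of the corresponding polynomial product. This reduces both claims to statements about power series.

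For the first claim, I would argue as follows. Because ${\bf G}$ is an ILTT matrix, it is lower triangular with $g_0$ on its diagonal, so invertibility forces $g_0\neq 0$. In $\mathbb{R}[z]/(z^n)$, a polynomial $g(z)$ with nonzero constant term is a unit, because its inverse may be produced as a formal power series and truncated modulo $z^n$. Let $h(z)\in\mathbb{R}[z]/(z^n)$ denote this inverse, and let ${\bf H}$ be the ILTT matrix corresponding to $h(z)$. Then ${\bf G}{\bf H}={\bf I}_n$ by construction, so ${\bf G}^{-1}={\bf H}$ is itself an ILTT matrix, proving the structural half of the lemma, and at the same time showing that ${\bf G}^{-1}(:,1)$ is precisely the coefficient vector of $h(z)$.

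For the complexity claim, I would compute $h(z)\pmod{z^n}$ by the Sieveking--Kung Newton iteration: starting from $h^{(0)}=1/g_0$, iterate
\begin{equation*}
h^{(k+1)}(z)\equiv h^{(k)}(z)\bigl(2-g(z)h^{(k)}(z)\bigr)\pmod{z^{2^{k+1}}}.
\end{equation*}
A standard induction shows that if $g h^{(k)}\equiv 1\pmod{z^{2^k}}$, then $g h^{(k+1)}\equiv 1\pmod{z^{2^{k+1}}}$, so the precision doubles at each step, and $\lceil\log_2 n\rceil$ iterations suffice. Step $k$ costs two polynomial multiplications of length $O(2^{k+1})$; performed via FFT each costs $O(2^k k)$ flops. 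Summing the geometric-type series $\sum_{k=0}^{\lceil\log_2 n\rceil}O(2^k k)=O(n\log n)$ yields the claimed bound. (An alternative in the Commenges paper uses Bini-type convolution tricks; either way the bottleneck is FFT.)

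The only subtle point is the complexity accounting: one must verify that the Newton step can be carried out entirely modulo $z^{2^{k+1}}$, so that FFT sizes grow geometrically rather than all being of length $n$, as otherwise one obtains $O(n\log^2 n)$ instead of $O(n\log n)$. This is the main obstacle and is the reason truncated cyclic convolutions (not full $n$-length FFTs at every step) are essential.
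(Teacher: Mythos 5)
The paper states this lemma by citation only (to Commenges and Monsion) and supplies no proof of its own, so there is no in-paper argument to compare against; your job here was to fill that gap, and you did so correctly. The ring isomorphism between $n\times n$ lower-triangular Toeplitz matrices and $\mathbb{R}[z]/(z^n)$ is exactly the right device: it gives the structural half of the lemma immediately (a series with nonzero constant term is a unit in $\mathbb{R}[z]/(z^n)$, and since that ring is commutative the two-sided inverse is again ILTT, with ${\bf G}^{-1}(:,1)$ equal to the coefficient vector of the inverse series), and it reduces the computational half to truncated power-series inversion. Your Sieveking--Kung Newton doubling is the standard way to carry that out, and your complexity accounting is the delicate point done correctly: truncating each Newton step modulo $z^{2^{k+1}}$ makes the FFT lengths grow geometrically, so the total cost is $\sum_{k}\mathcal{O}(2^{k}k)=\mathcal{O}(n\log n)$ rather than the $\mathcal{O}(n\log^2 n)$ one would obtain by naively running length-$n$ FFTs at every step. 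This is, in substance, the same recursive-doubling scheme as in the cited Commenges--Monsion reference, so your self-contained argument and the reference the paper leans on agree.
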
}

\textcolor{black}{Actually, ${\bf T}_i^{-1}(:,1)$ ($i=1,2,...,J$) can be computed and stored before solving the two-sided preconditioned system. From Lemma \ref{invlttlm}, we know that the computation of ${\bf T}_i^{-1}(:,1)$'s ($i=1,2,...,J$) and \eqref{step32} requires $\mathcal{O}(JN\log N)$ flops. Summing over the operation cost for \eqref{step31}--\eqref{step33}, we see that the computation of \eqref{step3} requires $\mathcal{O}(NJ\log (NJ))$ flops.}

\textcolor{black}{Summing up the above discussion, for a given vector  ${\bf v}\in\mathbb{R}^{NJ\times 1}$, the matrix-vector product ${\bf P}_l^{-1}{\bf A}{\bf P}_r^{-1}{\bf v}$ can be fast computed within  $\mathcal{O}(NJ\log(NJ))$ flops.}

As supported by Corollary \ref{maincoroll} and the numerical results in \textcolor{black}{Section \ref{experimentsection}}, we see that two Krylov subspace solvers, NCG and restarted \textcolor{black}{generalized minimal residual method (GMRES)} converge within an iteration number independent of $N$ and $J$, which means the Krylov subspace solvers for the preconditioned system \eqref{twosidedpreclinsys} requires operation cost proportional to that of one preconditioned matrix-vector product. \textcolor{black}{Hence, the Krylov subspace solvers for solving the preconditioned system \eqref{twosidedpreclinsys} requires only $\mathcal{O}(NJ\log(NJ))$ flops, which is nearly optimal as $NJ$ is the number of unknown.}
%

\begin{remark}\label{fastdirctsolrem}
It is clear that ${\bf A}={\bf P}_r{\bf P}_l$ when $a$ is a constant. In such case, the unpreconditioned all-at-once system \eqref{allatoncesysst} can be directly solved by ${\bf u}={\bf P}_l^{-1}({\bf P}_r^{-1}{\bf f})$. Clearly, the fast computation of ${\bf P}_l^{-1}({\bf P}_r^{-1}{\bf f})$ are already discussed above (see the implementation details for \eqref{step1} and \eqref{step3}), which requires $\mathcal{O}(NJ\log(NJ))$ flops. This is a fast direct PinT solver for the all-at-once system \eqref{allatoncesysst} in the case of $a({\bf x})$ being a constant.
\end{remark}

\section{Numerical Experiments}\label{experimentsection}
In this section, we test the proposed solver on several examples and compare it with the state-of-art solvers to show its efficiency. 
All numerical experiments are performed via MATLAB R2018a on a workstation equipped with dual Xeon Gold 6146 12-Cores 3.2GHz CPUs, NVIDIA Quadro P2000 GPU, 384GB RAM running CentOS Linux version 7.

Define the error as
\begin{equation*}
{\rm E}_{N,J}:=||{\bf u}^{*}-{\bf u}_{{\rm exact}}||_{\infty},
\end{equation*}
where ${\bf u}^{*}$ denotes some numerical solution; ${\bf u}_{{\rm exact}}$ denotes the exact solution of the non-local evolutionary equation on the whole time-space grid. Denote by CPU, the computational time in unit of second. Denote by DoF, degree of freedom (i.e., the number of unknowns $NJ$).

\begin{example}\label{constexmpl}
	{\rm 	
		In this example, we consider the non-local evolutionary equation \eqref{sub-diffusioneq}--\eqref{initial condition} with
		\begin{align*}
		&\Omega=(0,\pi)\times(0,\pi),~ T=1,~{\bf x}=(x,y),~ u(x,y,t)=\sin(x)\sin(y)t^2+x(\pi-x)y(\pi-y),\\
		& a(x,y)\equiv 1,\quad f(x,y,t)=\sin(x)\sin(y)\left[\frac{2t^{2-\alpha}}{\Gamma(3-\alpha)}+2t^2\right]+2[x(\pi-x)+y(\pi-y)].
		\end{align*}
		Notice that Example \ref{constexmpl} has a constant coefficient $a({\bf x})\equiv 1$. As introduced in Section \ref{introduction}, \textcolor{black}{a fast kernel compression based  time-stepping method}  was proposed in \cite{jiang2017fast} for the non-local evolutionary equation. We denote  this fast time-stepping method by FKC. When applying FKC to Example \ref{constexmpl}, the resulting time-stepping linear systems can be solved by \textcolor{black}{the} fast Poisson solver as $a$ is a constant. We denote the FKC with fast Poisson solver by FKC-FPS.
	    Another efficient solver introduced in Section \ref{introduction} is the fast approximation method proposed in \cite{lin2016fast}, in which the approximated all-at-once system is fast block diagonalizable with complex diagonal blocks.  When $a$ is a constant, the complex diagonal system can be solved by \textcolor{black}{the} fast Poisson solver. We denote the fast approximation method proposed in \cite{lin2016fast} with \textcolor{black}{the} fast Poisson solver by FAM-FPS. Moreover, as discussed in Remark \ref{fastdirctsolrem}, in the case of $a$ being a constant, our proposed implementation in Section \ref{implementsection} is exactly a fast direct solver for the all-at-once system. We denote our proposed fast direct solver for all-at-once system by FDS-AAO. We test FDS-AAO, FAM-FPS and FKC-FPS on Example \ref{constexmpl}, the results of which is listed in Table \ref{csntexpltb}. Table \ref{csntexpltb} shows that (i) the two PinT solvers, FDS-AAO and FAM-FPS, are  more efficient than the time-stepping solver FKC-FPS in terms of computational time; (ii) FDS-AAO and FKC-FPS are generally more accurate than FAM-FPS because of the additional matrix approximation error introduced in FAM-FPS. Overall, the proposed FDS-AAO solver performs the best among the three solvers. \textcolor{black}{From Table \ref{csntexpltb}, we also see that FDS-AAO requires less computational time than FAM-FPS, although they are both PinT solvers. This is because that  FAM-FPS involves complex arithmetic (i.e., the operations on complex numbers) while FDS-AAO  only involves real arithmetic.}
		\begin{table}[H]
			\begin{center}
				\caption{Performance of FDS-AAO, FAM-FPS and FKC-FPS with $J=2^{14}$.}\label{csntexpltb}
				\setlength{\tabcolsep}{0.73em}
				\begin{tabular}[c]{ccc|cc|cc|cc}
					\hline
					\multirow{2}{*}{$\alpha$} &\multirow{2}{*}{$N+1$} &\multirow{2}{*}{DoF}&\multicolumn{2}{c|}{FDS-AAO} & \multicolumn{2}{c|}{FAM-FPS}  &\multicolumn{2}{c}{FKC-FPS}\\
					\cline{4-9}
					&&&$\mathrm{CPU}$&${\rm E}_{N,J}$&$\mathrm{CPU}$&${\rm E}_{N,J}$&$\mathrm{CPU}$& ${\rm E}_{N,J}$\\
					\hline
					\multirow{4}{*}{0.1}
					&$2^{14}$ &268419072 &34.72s &3.19e-5 &45.84s           &3.24e-5&86.39s    &3.19e-5\\
					&$2^{15}$ &536854528 &68.55s &3.19e-5 &95.37s           &3.25e-5&219.67s   &3.19e-5\\
					&$2^{16}$ &1073725440&149.32s&3.19e-5 &187.69s          &3.22e-5&354.20s   &3.19e-5\\
					&$2^{17}$ &2147467264&287.65s&3.19e-5 &402.45s          &3.22e-5&734.20s   &3.19e-5\\
					\hline
					\multirow{4}{*}{0.5}
					&$2^{14}$ &268419072 &34.24s &2.77e-5 &49.26s           &2.55e-5&86.04s    &2.77e-5\\
					&$2^{15}$ &536854528 &68.41s &2.76e-5 &101.86s          &2.98e-5&178.63s   &2.76e-5\\
					&$2^{16}$ &1073725440&139.46s&2.76e-5 &198.94s          &3.00e-5&373.86s   &2.76e-5\\
					&$2^{17}$ &2147467264&298.40s&2.76e-5 &403.99s          &2.75e-5&748.64s   &2.76e-5\\
					\hline
					\multirow{4}{*}{0.9}
					&$2^{14}$ &268419072 &34.25s &3.12e-5 &48.49s           &7.27e-5&87.52s    &3.12e-5\\
					&$2^{15}$ &536854528 &68.65s &2.67e-5 &92.86s           &7.51e-5&174.42s   &2.67e-5\\
					&$2^{16}$ &1073725440&139.84s&2.46e-5 &185.44s          &2.59e-5&376.14s   &2.46e-5\\
					&$2^{17}$ &2147467264&301.20s&2.36e-5 &407.93s          &9.35e-5&735.85s   &2.36e-5\\
					\hline
				\end{tabular}
			\end{center}
		\end{table}
		
	}
\end{example}

The rest of this section is devoted to testing efficiency of the proposed two-sided preconditioning technique on examples with non-constant $a({\bf x})$. Since the three solvers, FDS-AAO, FAM-FPS and FKC-FPS tested in Example \ref{csntexpltb} are only available for the non-local evolutionary equation with constant coefficient, they will not be tested in the remaining content.
%

Denote by `Iter', the iteration number of an iterative solver. For all Krylov subspace solvers tested in this section, we set zero vector as initial guess and set $||{\bf r}^k||_2\leq 10^{-7}||{\bf r}^0||_2$ as stopping criterion if not specified, where ${\bf r}^k$ denotes the residual vector at $k$-th iteration.

As described in Section \ref{introduction}, in \cite{lin2016fast}, the fast approximation method with \textcolor{black}{a} multigrid spatial solver is proposed for solving the non-local evolutionary equation with non-constant $a({\bf x})$. We denote the fast approximation method with \textcolor{black}{the} multigrid spatial solver by FAM-MG.
As indicated by Corollary \ref{mainthm}, NCG solver can be employed to solve the two-sided preconditioned system \eqref{twosidedpreclinsys}. We denote the NCG solver for the two-sided preconditioned system by NCG-2S. Besides, GMRES solver can be also used to solve the two-sided preconditioned system \eqref{twosidedpreclinsys}, as it does not require symmetry of the linear system. We denote GMRES solver for the two-sided preconditioned system \eqref{twosidedpreclinsys} by GMRES-2S. The GMRES solver employed in this paper is a restarted version with restarting number 50.

\begin{example}\label{varcoeff2dexmpl}
	{\rm
		Consider the problem \eqref{sub-diffusioneq}--\eqref{initial condition} with
		\begin{align*}
		&\Omega=(0,1)^2,\quad T=1,\quad{\bf x}=(x,y),\quad a(x,y)=40+x^{3.5}+y^{3.5},\\
		& f(x,y,t)=\sin(\pi x)\sin(\pi y)\left[\frac{2t^{2-\alpha}}{\Gamma(3-\alpha)}+2\pi^2at^2\right]\\
		&\qquad\qquad\quad-\pi t^2\left[(\partial_xa)\cos(\pi x)\sin(\pi y)+(\partial_ya)\sin(\pi x)\cos(\pi y)\right],
		\end{align*}
		the analytical solution of which is $u(x,y,t)=\sin(\pi x)\sin(\pi y)t^2$.
		We test GMRES-2S, FAM-MG and NCG-2S on Example \ref{varcoeff2dexmpl}, the results of which are listed in Tables \ref{varcoeff2dexmpltb1}--\ref{varcoeff2dexmpltb2}. Tables \ref{varcoeff2dexmpltb1}--\ref{varcoeff2dexmpltb2} \textcolor{black}{show} that (i) NCG-2S has a bounded iteration number, which illustrates a matrix-size-independent convergence rate and supports Corollary \ref{maincoroll}; (ii) GMRES-2S and NCG-2S are more efficient than FAM-MG in terms of CPU while accuracy of the three solvers are almost the same, which demonstrates the efficiency of the proposed two-sided preconditioning technique. Additionally, we note that GMRES-2S and NCG-2S \textcolor{black}{converge} equally fast (i.e., the iteration number is the same) while CPU of NCG-2S is roughly twice as much as that of GMRES-2S. This is due to the additional matrix transpose involved in NCG-2S, which doubles the computational cost of each matrix-vector product.
		\begin{table}[H]
			\begin{center}
				\caption{Results of different solvers for solving Example \ref{varcoeff2dexmpl} when  $J=16129$.}\label{varcoeff2dexmpltb1}
				\setlength{\tabcolsep}{0.48em}
				\begin{tabular}[c]{ccc|ccc|ccc|cc}
					\hline
					\multirow{2}{*}{$\alpha$} &\multirow{2}{*}{$N+1$} &\multirow{2}{*}{DoF}& \multicolumn{3}{c|}{GMRES-2S}   &\multicolumn{3}{c|}{NCG-2S}& \multicolumn{2}{c}{FAM-MG}\\
					\cline{4-11}
					&&&$\mathrm{Iter}$&$\mathrm{CPU}$&$\mathrm{E}_{N,J}$&$\mathrm{Iter}$&$\mathrm{CPU}$&$\mathrm{E}_{N,J}$&$\mathrm{CPU}$&$\mathrm{E}_{N,J}$\\
					\hline
					\multirow{4}{*}{0.1} 
					&$2^{10}$ &16499967 &4   &19.45s  &4.97e-5 &4   &37.49s   &4.97e-5&38.37s &4.97e-5 \\
					&$2^{11}$ &33016063 &4   &38.41s  &4.97e-5 &4   &73.59s   &4.97e-5&75.90s &4.97e-5 \\
					&$2^{12}$ &66048255 &4   &76.99s  &4.97e-5 &4   &146.32s  &4.97e-5&153.04s&4.97e-5 \\
					&$2^{13}$ &132112639&4   &154.75s &4.97e-5 &4   &298.64s  &4.97e-5&304.61s&4.97e-5 \\
					\hline
					\multirow{4}{*}{0.5} 
					&$2^{10}$ &16499967 &4   &19.44s  &4.96e-5 &4   &37.26s   &4.97e-5&37.92s &4.96e-5 \\
					&$2^{11}$ &33016063 &4   &38.34s  &4.96e-5 &4   &73.54s   &4.97e-5&76.21s &4.96e-5 \\
					&$2^{12}$ &66048255 &4   &77.31s  &4.96e-5 &4   &147.07s  &4.97e-5&152.75s&4.96e-5 \\
					&$2^{13}$ &132112639&4   &154.97s &4.96e-5 &4   &297.90s  &4.97e-5&308.87s&4.96e-5 \\
					\hline
					\multirow{4}{*}{0.9}
					&$2^{10}$ &16499967 &4   &19.53s  &5.01e-5 &4   &37.52s   &5.01e-5&37.91s &5.01e-5 \\
					&$2^{11}$ &33016063 &4   &38.49s  &4.99e-5 &4   &73.41s   &4.98e-5&75.96s &4.99e-5 \\
					&$2^{12}$ &66048255 &4   &77.78s  &4.97e-5 &4   &148.31s  &4.97e-5&164.23s&4.97e-5 \\
					&$2^{13}$ &132112639&4   &155.37s &4.97e-5 &4   &297.88s  &4.96e-5&351.57s&4.97e-5 \\
					\hline
				\end{tabular}
			\end{center}
		\end{table}
		
		\begin{table}[H]
			\begin{center}
				\caption{Results of different solvers for solving Example \ref{varcoeff2dexmpl} when  $N+1=2^6$.}\label{varcoeff2dexmpltb2}
				\setlength{\tabcolsep}{0.43em}
				\begin{tabular}[c]{ccc|ccc|ccc|cc}
					\hline
					\multirow{2}{*}{$\alpha$} &\multirow{2}{*}{$J$}  &\multirow{2}{*}{DoF}& \multicolumn{3}{c|}{GMRES-2S}   &\multicolumn{3}{c|}{NCG-2S}& \multicolumn{2}{c}{FAM-MG}\\
					\cline{4-11}
					&&&$\mathrm{Iter}$&$\mathrm{CPU}$&$\mathrm{E}_{N,J}$&$\mathrm{Iter}$&$\mathrm{CPU}$&$\mathrm{E}_{N,J}$&$\mathrm{CPU}$&$\mathrm{E}_{N,J}$\\
					\hline
					\multirow{4}{*}{0.1} 
					&65025   &4096575  &4   &5.35s   &1.25e-5 &4   &9.81s    &1.25e-5&10.30s &1.25e-5 \\
					&261121  &16450623 &4   &19.70s  &3.16e-6 &4   &36.33s   &3.18e-6&42.38s &3.15e-6 \\
					&1046529 &65931327 &4   &74.26s  &8.29e-7 &4   &137.64s  &8.54e-7&184.96s&8.30e-7 \\
					&4190209 &263983167&4   &295.96s &2.47e-7 &4   &544.08s  &2.72e-7&772.89s&2.55e-7 \\
					\hline
					\multirow{4}{*}{0.5} 
					&65025   &4096575  &4   &5.38s   &1.36e-5 &4   &9.62s    &1.36e-5&10.47s &1.36e-5 \\
					&261121  &16450623 &4   &19.62s  &4.25e-6 &4   &36.43s   &4.27e-6&42.98s &4.25e-6 \\
					&1046529 &65931327 &4   &73.35s  &1.92e-6 &4   &137.48s  &1.95e-6&186.68s&1.92e-6 \\
					&4190209 &263983167&4   &298.47s &1.34e-6 &4   &545.14s  &1.37e-6&780.49s&1.36e-6 \\
					\hline
					\multirow{4}{*}{0.9}
					&65025   &4096575  &4   &5.23s   &2.39e-5 &4   &9.70s    &2.40e-5&9.91s  &2.47e-5 \\
					&261121  &16450623 &4   &19.59s  &1.46e-5 &4   &36.42s   &1.47e-5&44.18s &1.46e-5 \\
					&1046529 &65931327 &4   &73.41s  &1.23e-5 &4   &137.63s  &1.23e-5&190.48s&1.23e-5 \\
					&4190209 &263983167&4   &294.64s &1.17e-5 &4   &548.21s  &1.18e-5&768.26s&1.17e-5 \\
					\hline
				\end{tabular}
			\end{center}
		\end{table}
	}
\end{example}

Actually, not only in Example \ref{varcoeff2dexmpl}, GMRES-2S always converges no slower than NCG-2S in later examples. Hence, in the later examples, the results of NCG-2S are not listed. To demonstrate that the proposed two-sided preconditioning \textcolor{black}{technique} significantly improves the convergence rate of \textcolor{black}{the} Krylov subspace solver, we also test the unpreconditioned GMRES method in Example \ref{discticoeff3dexmpl}. We denote the unpreconditioned GMRES method by GMRES-${\bf I}$. 
In FAM-MG, the multigrid method is originally proposed for two-dimensional spatial \textcolor{black}{problems}. However, the multigrid method can also be extended to solving three-dimensional spatial \textcolor{black}{problems} by using the damped Jacobi smoother with $0.5$ as damping factor. With this extended multigrid spatial solver, FAM-MG can be used to solve  Example \ref{discticoeff3dexmpl}, a three-spatial-dimension non-local evolutionary equation.
As the exact solution of Example \ref{discticoeff3dexmpl} is unknown, we use the following residual quantity to measure the accuracy of different solvers:
\begin{equation*}
{\rm RES}=\frac{||{\bf f}-{\bf A}{\bf u}^{*}||_2}{||{\bf f}||_2},
\end{equation*}
where ${\bf u}^{*}$ denotes some approximate solution to the original all-at-once linear system \eqref{allatoncesysst}. 
\begin{example}\label{discticoeff3dexmpl}
	{\rm
		Consider the problem \eqref{sub-diffusioneq}--\eqref{initial condition} with
		\begin{align*}
		& \Omega=(0,1)^3,\quad T=1,\quad {\bf x}=(x,y,z),\quad u(\cdot,0)|_{\Omega}\equiv 0,\quad u|_{\partial\Omega\times(0,T]}\equiv 0\\
		&a(x,y,z)=\begin{cases}
		2,\quad x<0.5,\\
		2.5,\quad x\geq 0.5,
		\end{cases}\\
		& f(x,y,z,t)=xyz(1-x)(1-y)(1-z)\left[t^2+\frac{2t^{2-\alpha}}{\Gamma(3-\alpha)}\right].
		\end{align*}
		We test GMRES-2S, GMRES-${\bf I}$ and FAM-MG on Example \ref{discticoeff3dexmpl}, the results of which are listed in Tables \ref{expl3ddisctctb1}--\ref{expl3ddisctctb2}. Tables \ref{expl3ddisctctb1}--\ref{expl3ddisctctb2} shows that (i) \textcolor{black}{the} convergence rate of GMRES-2S is independent of matrix size for Example \ref{discticoeff3dexmpl}; (ii) GMRES-2S is the most efficient one among the three solvers in terms of CPU cost. Moreover, GMRES-2S converges much faster than GMRES-${\bf I}$ and the iteration number of GMRES-2S is more stable than that of GMRES-${\bf I}$ (especially in Table \ref{expl3ddisctctb2}), which demonstrates that the proposed two-sided preconditioning \textcolor{black}{technique} significantly improves  convergence rate and robustness of \textcolor{black}{the} Krylov subspace solvers for the discrete TFSDE problem. Additionally, we note that the coefficient $a$ in Example \ref{discticoeff3dexmpl} has a jump. That means the performance of the proposed two-sided preconditioning technique does not rely on smoothness of $a(\cdot)$.
		\begin{table}[H]
			\begin{center}
				\caption{Results of different solvers for solving Example \ref{discticoeff3dexmpl} when $J=2048383$.}\label{expl3ddisctctb1}
				\setlength{\tabcolsep}{0.48em}
				\begin{tabular}[c]{ccc|ccc|ccc|cc}
					\hline
					\multirow{2}{*}{$\alpha$} &\multirow{2}{*}{$N+1$} &\multirow{2}{*}{DoF}& \multicolumn{3}{c|}{GMRES-2S} &\multicolumn{3}{c|}{GMRES-${\bf I}$}& \multicolumn{2}{c}{FAM-MG}  \\
					\cline{4-11}
					&&&$\mathrm{Iter}$&$\mathrm{CPU}$&${\rm RES}$&$\mathrm{Iter}$&$\mathrm{CPU}$&${\rm RES}$&$\mathrm{CPU}$&${\rm RES}$\\
					\hline
					\multirow{4}{*}{0.1} 
					&$2^1$ &4096766  &5   &8.44s   &5.71e-8 &1069&798.66s  &9.91e-8&42.40s &7.58e-8 \\
					&$2^2$ &8193532  &5   &14.10s  &5.72e-8 &1069&1335.83s &9.88e-8&104.67s&7.55e-8 \\
					&$2^3$ &16387064 &5   &25.20s  &5.73e-8 &1068&2329.51s &9.98e-8&191.45s&2.08e-7 \\
					&$2^4$ &32774128 &5   &48.58s  &5.73e-8 &1068&4413.97s &9.98e-8&369.92s&2.04e-7 \\
					\hline
					\multirow{4}{*}{0.5} 
					&$2^1$ &4096766  &5   &8.16s   &5.82e-8 &1065&796.62s  &9.89e-8&43.17s &8.12e-8 \\
					&$2^2$ &8193532  &5   &13.98s  &5.89e-8 &1062&1329.30s &9.98e-8&96.41s &2.07e-7 \\
					&$2^3$ &16387064 &5   &25.30s  &5.94e-8 &1061&2365.20s &9.96e-8&180.88s&5.68e-7 \\
					&$2^4$ &32774128 &5   &48.58s  &5.96e-8 &1061&4442.44s &9.90e-8&341.37s&3.04e-7 \\
					\hline
					\multirow{4}{*}{0.9}
					&$2^1$ &4096766  &5   &8.23s   &5.96e-8 &1063&779.27s  &9.88e-8&43.28s &1.06e-7 \\
					&$2^2$ &8193532  &5   &14.16s  &6.23e-8 &1059&1320.54s &9.90e-8&93.66s &2.33e-7 \\
					&$2^3$ &16387064 &5   &25.36s  &6.47e-8 &1060&2344.33s &9.90e-8&180.13s&1.94e-6 \\
					&$2^4$ &32774128 &5   &48.62s  &6.63e-8 &1062&4435.52s &9.97e-8&333.68s&1.87e-6 \\
					\hline
				\end{tabular}
			\end{center}
		\end{table}
		
		\begin{table}[H]
			\begin{center}
				\caption{Results of different solvers for solving Example \ref{discticoeff3dexmpl} when  $N=2^6$.}\label{expl3ddisctctb2}
				\setlength{\tabcolsep}{0.35em}
				\begin{tabular}[c]{ccc|ccc|ccc|cc}
					\hline
					\multirow{2}{*}{$\alpha$} &\multirow{2}{*}{$J$} &\multirow{2}{*}{DoF}& \multicolumn{3}{c|}{GMRES-2S} &\multicolumn{3}{c|}{GMRES-${\bf I}$}& \multicolumn{2}{c}{FAM-MG}  \\
					\cline{4-11}
					&&&$\mathrm{Iter}$&$\mathrm{CPU}$&${\rm RES}$&$\mathrm{Iter}$&$\mathrm{CPU}$&${\rm RES}$&$\mathrm{CPU}$&${\rm RES}$\\
					\hline
					\multirow{4}{*}{0.1} 
					&3375    &216000   &5   &0.42s   &5.92e-8 &50  &1.80s    &8.55e-8&2.98s   &5.05e-7 \\
					&29791   &1906624  &5   &3.62s   &7.43e-8 &111 &27.10s   &9.33e-8&14.44s  &4.46e-7 \\
					&250047  &16003008 &5   &27.55s  &7.15e-8 &323 &607.14s  &9.84e-8&123.89s &2.28e-7 \\
					&2048383 &131096512&5   &208.56s &5.73e-8 &1068&15308.26s&9.97e-8&1267.51s&3.18e-7 \\
					\hline
					\multirow{4}{*}{0.5} 
					&3375    &216000   &5   &0.40s   &8.11e-8 &62  &2.12s    &9.12e-8&2.85s   &2.50e-6 \\
					&29791   &1906624  &5   &3.49s   &9.16e-8 &131 &31.46s   &9.21e-8&13.72s  &2.11e-6 \\
					&250047  &16003008 &5   &27.39s  &7.80e-8 &347 &656.47s  &9.78e-8&119.88s &1.31e-6 \\
					&2048383 &131096512&5   &206.29s &5.97e-8 &1060&15215.04s&9.99e-8&1237.80s&5.24e-7 \\
					\hline
					\multirow{4}{*}{0.9}
					&3375    &216000   &5   &0.37s   &1.06e-7 &76  &2.55s    &9.54e-8&2.79s   &3.59e-6 \\
					&29791   &1906624  &5   &3.45s   &1.05e-7 &162 &38.97s   &9.98e-8&13.60s  &6.17e-6 \\
					&250047  &16003008 &5   &27.62s  &8.47e-8 &359 &685.34s  &9.74e-8&113.02s &1.79e-6 \\
					&2048383 &131096512&5   &209.76s &6.77e-8 &1066&15268.06s&9.92e-8&1204.87s&1.08e-6 \\
					\hline
				\end{tabular}
			\end{center}
		\end{table}
	}
\end{example}

\section{Concluding Remarks and Future Works}
In this paper, a novel two-sided PinT preconditioning technique for the all-at-once system from the non-local evolutionary equation with variable coefficients has been proposed. Theoretically, we have shown that the condition number of the two-sided preconditioned matrix is uniformly bounded by a constant independent of matrix size. Also, a fast implementation of Krylov subspace solver for the two-sided preconditioned system have been proposed. The proposed implementation is also a fast direct PinT solver for the unpreconditioned all-at-once system in the case of $a$ being a constant.
Numerical results reported have confirmed the effectiveness of the proposed preconditioning technique and consistency of the proposed theoretical analysis.

As the study of PinT fast \textcolor{black}{solvers} for the non-local evolutionary equation is still at its infancy, there are some extreme situations that can not be well handled by methods in the literature  as well as our proposed preconditioning technique. We list  the following issues as open problems 
\begin{description}
\item[$\bullet$] As shown in Theorem \ref{mainthm}, $\hat{a}/\check{a}$ is a uniform bound of the condition number of the preconditioned matrix, which means a small value of $\hat{a}/\check{a}$ guarantees a small condition number and thus a small iteration number of \textcolor{black}{the} Krylov subspace solvers. However, \textcolor{black}{if} $\hat{a}/\check{a}$ is large or $a$ even has zeros, then there is no guarantee that the condition number of the preconditioned matrix is small, which may lead to a slow convergence of \textcolor{black}{the} Krylov subspace solvers.
\item[$\bullet$] The fast implementation of the proposed preconditioning technique utilizes the fast diagonalizability of the constant Laplacian matrix ${\bf L}_1$. Such fast diagonalizability relies on the uniform spatial grid. When the physical domain $\Omega$ is irregular, there is no such uniform spatial grid discretization for $\Omega$. In such situation, our proposed preconditioning technique may not be applied directly.
\end{description}
Developing fast PinT \textcolor{black}{solvers} for the above introduced tough situations has to be the subject of future investigations.

\section*{Acknowledgements}
This research was supported by research grants HKRGC GRF 12306616, 12200317, 12300218 and 12300519, NSAF U930402.
and NSFC 11801479.

\bibliographystyle{plainnat}
\bibliography{myreferences}
\end{document}